\numberwithin{equation}{section}
\newtheorem{definition}{Definition}[section]
\newtheorem{lemma}[definition]{Lemma}
\newtheorem{theorem}[definition]{Theorem}
\newtheorem{proposition}[definition]{Proposition}
\newtheorem{remarkth}[definition]{Remark}
\newtheorem{question}[definition]{Question}
\begin{document}

\title[Poisson structures on wrinkled fibrations]{Poisson structures on wrinkled fibrations}

\author{P. Su\'arez-Serrato}

\address{Instituto de Matem\'aticas, Universidad Nacional Aut\'onoma de M\'exico\\Circuito Exterior, Ciudad Universitaria\\Coyoac\'an, 04510\\Mexico City\\Mexico.}

\email{pablo@im.unam.mx}
 \urladdr{http://www.matem.unam.mx/PabloSuarezSerrato}

\author{J. Torres Orozco}

\address{CIMAT\\ Jalisco S/N, Valenciana\\ Guanajuato\\ Mexico.}

\email{jonatan@cimat.mx}

\begin{abstract} We provide local formul{\ae} for Poisson bivectors and symplectic forms on the leaves of Poisson structures associated to wrinkled fibrations on smooth $4$--manifolds.
\end{abstract}

\subjclass[2010]{57R17, 53D17.}
\keywords{Poisson structures, smooth $4$-manifolds, wrinkled fibrations}

\maketitle

\section{Introduction}

The study of smooth manifolds of dimension four has led to various interesting types of fibrations. One of the origins of this research direction can be found in the seminal paper by Auroux, Donaldson, and Katzarkov \cite{ADK05}. They described near-symplectic forms adapted to a type of fibration that has since been called {\it broken Lefschetz fibration}, this name was introduced by Perutz \cite{P07-2}.

\begin{definition}
\label{D:BLF}
On a smooth $4$--manifold $X$, a {\it broken Lefschetz fibration} is a smooth map $f\colon X \rightarrow S^2$ that is a submersion outside the singularity set.  Moreover, the allowed singularities are of the following type:
\begin{enumerate}
\item {\it Lefschetz} singularities:  finitely many points
\begin{equation*}
\{
p_1, \dots , p_k\} \subset X,
\end{equation*}
which are locally modeled by
complex charts
\begin{equation*} {\bf C}^{2} \rightarrow {\bf C}  ,  \quad \quad (z_1, z_2) \mapsto z_{1}^{2} + z_{2}^{2},
\end{equation*}

\item {\it indefinite fold} singularities, also called {\it
broken}, contained in the smooth embedded 1-dimensional
submanifold $\Gamma \subset X \setminus  \{ p_1, \dots , p_k\} $,
which are locally modelled by the real charts
\begin{equation*} {\bf R}^{4} \rightarrow{\bf R}^{2} ,  \quad \quad  (t,x_1,x_2,x_3) \mapsto (t, - x_{1}^{2} + x_{2}^{2} + x_{3}^{2}).
\end{equation*}
\end{enumerate}
\end{definition}

The term indefinite in (ii) refers to the fact that the quadratic form $- x_{1}^{2} + x_{2}^{2} + x_{3}^{2}$ is neither negative nor positive definite. In the language of singularity theory, these subsets are known as fold singularities of corank 1. Since $X$ is closed, $\Gamma$ is homeomorphic to a collection of disjoint circles.
 For this reason, throughout this work, we will often refer to $\Gamma$ as {\it singular circles}.  On the other hand, we can only assert that $f(\Gamma)$ is  a union of immersed curves. In particular, the images of the components of $\Gamma$ need not be disjoint, and the image of each component can self-intersect.

In \cite{GSV15} the first named author together with Garc\'ia-Naranjo and Vera exhibited a Poisson structure whose symplectic leaves coincide with the fibres of a broken Lefschetz fibration, and the singular sets of both structures coincide.

The notion of {\it wrinkled fibration} on a smooth $4$-manifold was introduced by Lekili \cite{L09}, who showed that these wrinkled fibrations exist in every closed oriented smooth $4$--manifold. Broken Lefschetz fibrations are not stable, as maps. In contrast, wrinkled fibrations are stable. So if one is interested in perturbations of broken Lefschetz fibrations, one is led naturally to the study of wrinkled fibrations. Lekili described a set of moves that describe, up to homotopy, all the possible one-parameter deformations of broken and wrinkled fibrations. These were then further studied by Williams \cite{W10}. Both \cite{L09} and \cite{W10} are motivated by the {\it Lagrangian matching invariants} defined by Perutz \cite{P07-2}. These moves preserve the diffeomorphism type of the underlying smooth $4$--manifold.

Let $X$ be a closed $4$--manifold, and $\Sigma$ be a $2$--dimensional surface. A  map $f: X \to \Sigma$ is said to have a cusp singularity at a point $p$ in $X$, if around $p$, $f$ is locally modeled in oriented charts by the map:
\begin{equation*}(t, x, y, z) \mapsto  (t, x^3 -3xt + y^2 - z^2).
\end{equation*}
The critical point set is a smooth arc, $\{ x^2=t, y=0, z=0 \}$,
the critical value set is a cusp given by $\{ (t,s) : 4t^3=s^2
\}$ (see figure \ref{fig:cusp}).
\begin{figure}
    \centering
    \includegraphics[width=0.4\textwidth]{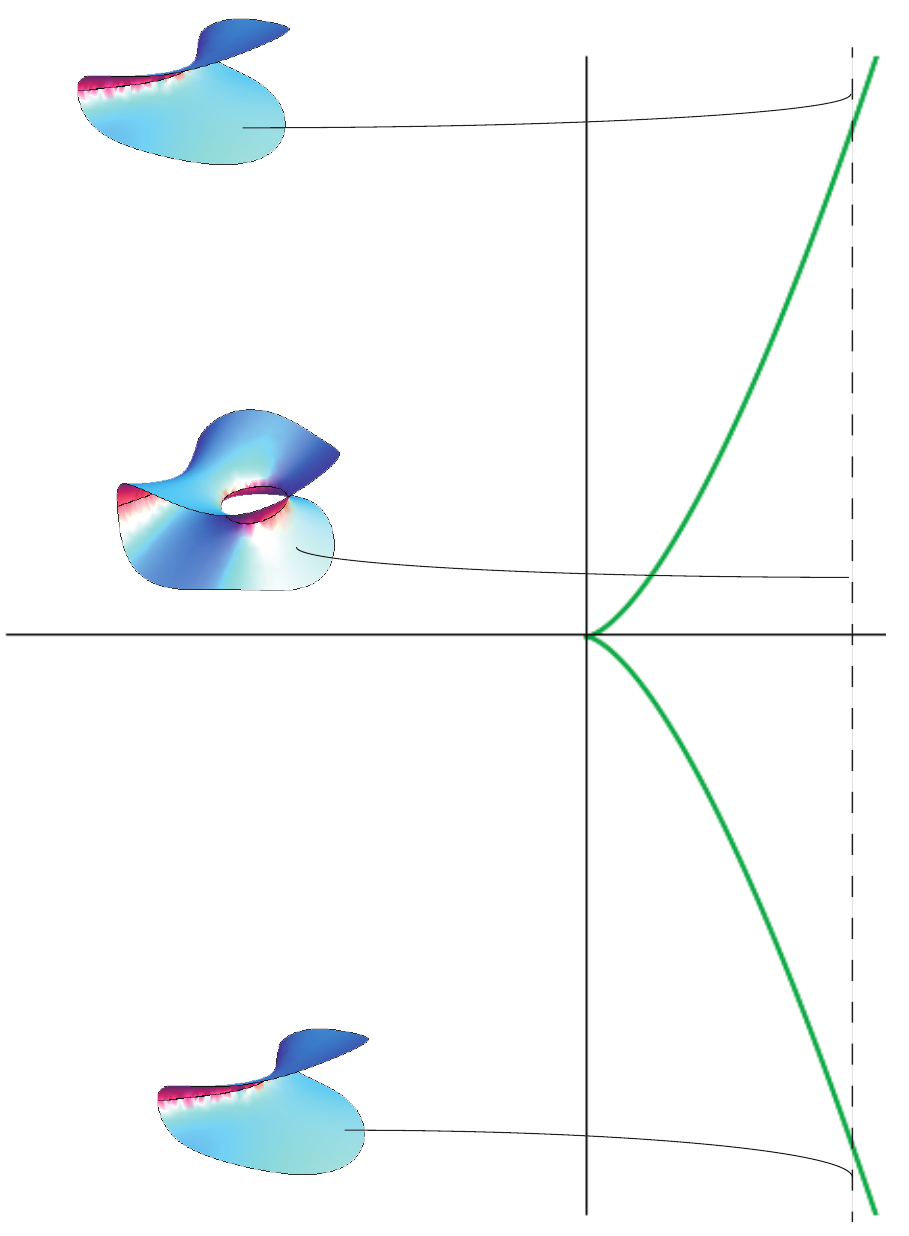}
    \caption{A diagram depicting a fibration with a cusp singularity along the critical values depicted in green, which is a subset of the base of this fibration. The black lines indicate the points over which each of the fibres lie.}
    \label{fig:cusp}
\end{figure}
Following Lekili we state:
 \begin{definition} A wrinkled fibration on a closed $4$--manifold $X$ is a smooth map $f$ to a closed surface which is a broken fibration when restricted to $X\setminus C$, where $C$ is a finite set such that around each point in $C$, $f$ has cusp singularities. We say that a fibration is {\it purely wrinkled} if it has no isolated Lefschetz-type singularities.
\end{definition}

Wrinkled fibrations may be obtained from broken Lefschetz fibrations by performing wrinkling moves. These eliminate a Lefschetz type singularity and introduce a wrinkled fibration structure. Conversely, it is possible to modify a wrinkled fibration locally by smoothing out the cusp singularity by introducing a Lefschetz type singularity, so obtaining a broken fibration (see \cite{L09, W10}).

Next we will introduce the deformations of wrinkled fibrations that will appear in our Poisson strucures. As Lekili, by a {\it deformation of wrinkled fibrations} we mean a one-parameter family of maps which is a wrinkled fibration for all but finitely many values. One of Lekili's major contributions in \cite{L09} was to show that any one-parameter family deformation of a purely wrinkled fibration is homotopic (relative endpoints) to one which realizes a sequence of births, merges, flips, their inverses, and isotopies staying within the class of purely wrinkled fibrations. Moreover, these moves do not change the diffeomorphism type of the $4$--manifold $X$ on which they take place.

Let us briefly describe these moves, readers may consult both \cite{L09, W10} for the corresponding descriptions in terms of how the fibres change and how these moves can be described using handlebodies. The moves we are interested in are to be considered as maps ${\bf R} \times {\bf R}^3 \to {\bf R}$, given by the following equations, each depending on a real parameter $s$:
\medskip
\\ {\bf Move 1 (Birth, fig. \ref{fig:birth})}
\begin{equation*}
b_s(x, y, z, t)=(t, x^3-3x(t^2-s)+y^2-z^2)
\end{equation*}

\begin{figure}
    \centering
    \includegraphics[width=1\textwidth]{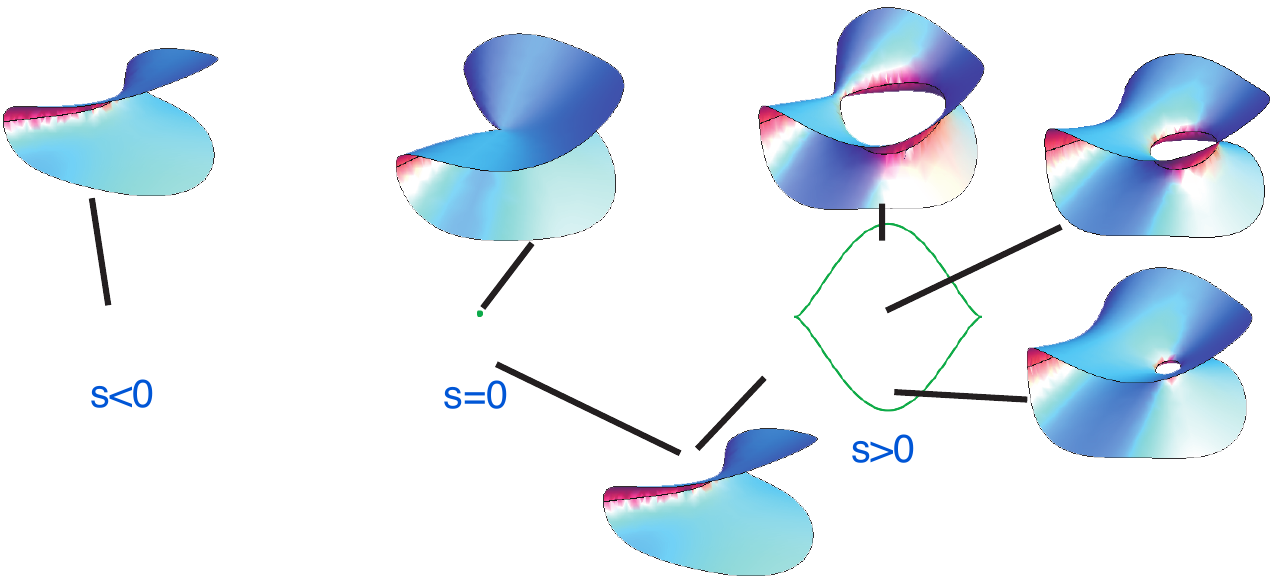}
    \caption{ {\bf Birth Move:} $b_s(x, y, z, t)=(t, x^3-3x(t^2-s)+y^2-z^2)$.  For $s<0$ the critical set is empty. Then when $s=0$ the fibre above the critical point is shown to develop a singularity. As $s$ becomes postive the wrinkled critical set appears, here depicted by the green line, which is a subset of the base of this fibration. The black lines indicate the points over which each of the fibres lies.}
    \label{fig:birth}
\end{figure}
\medskip
 \noindent{\bf Move 2 (Merging, fig. \ref{fig:merge})}
\begin{equation*}
m_s(x, y, z, t)=(t, x^3-3x(s-t^2)+y^2-z^2)
\end{equation*}
\begin{figure}
    \centering
    \includegraphics[width=1\textwidth]{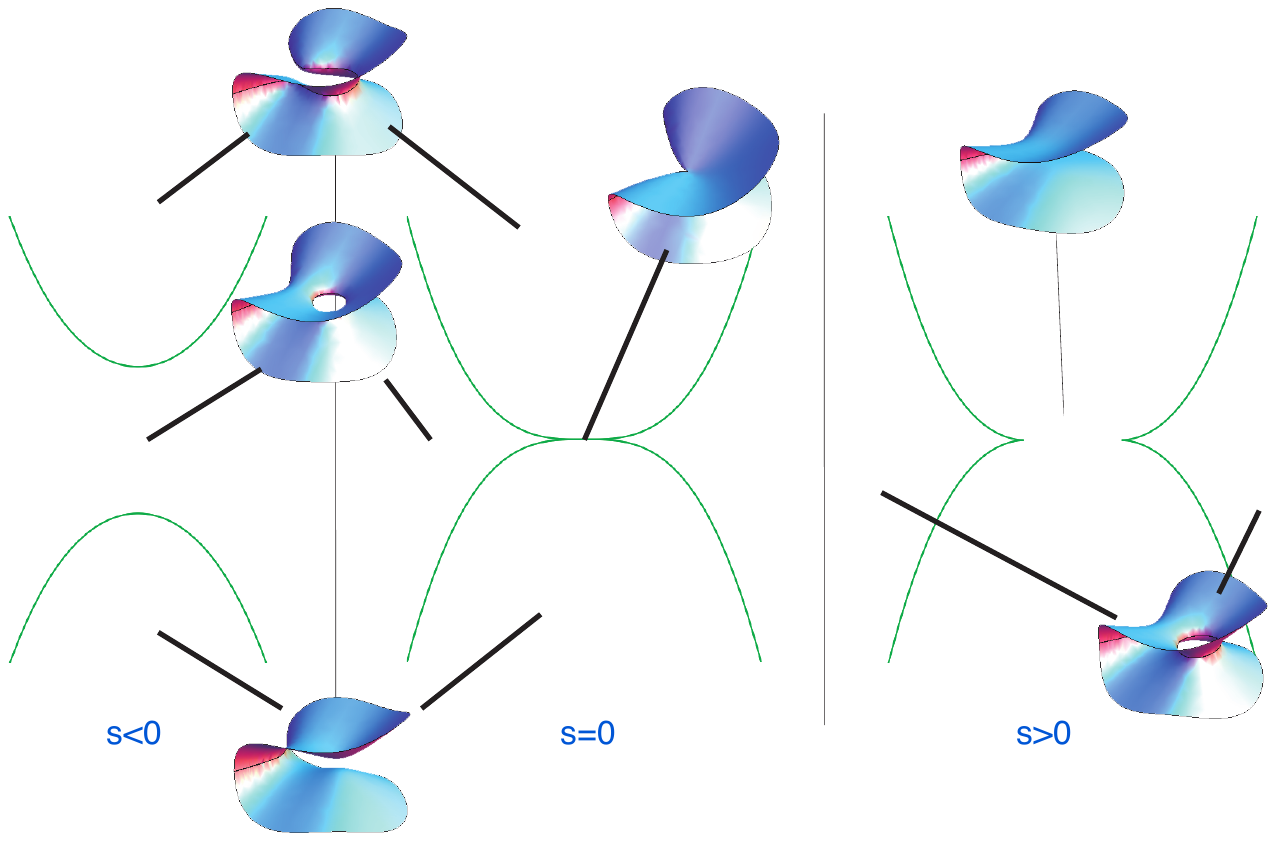}
    \caption{{\bf Merging Move:}   $m_s(x, y, z, t)=(t, x^3-3x(s-t^2)+y^2-z^2)$. For $s<0$ the critical set shows two connected components. Then when $s=0$ these two components touch, and the fibre above the critical point is shown to develop a singularity. As $s$ becomes postive the critical set (shown in green)  separates again. The black lines indicate the points over which each of the fibres lies.}
    \label{fig:merge}
\end{figure}

\medskip
  \noindent{\bf Move 3 (Flipping, fig. \ref{fig:flip})}
\begin{equation*}
f_s(x, y, z, t)=(t, x^4-x^2s+xt+y^2-z^2)
\end{equation*}
\begin{figure}
    \centering
    \includegraphics[width=1\textwidth]{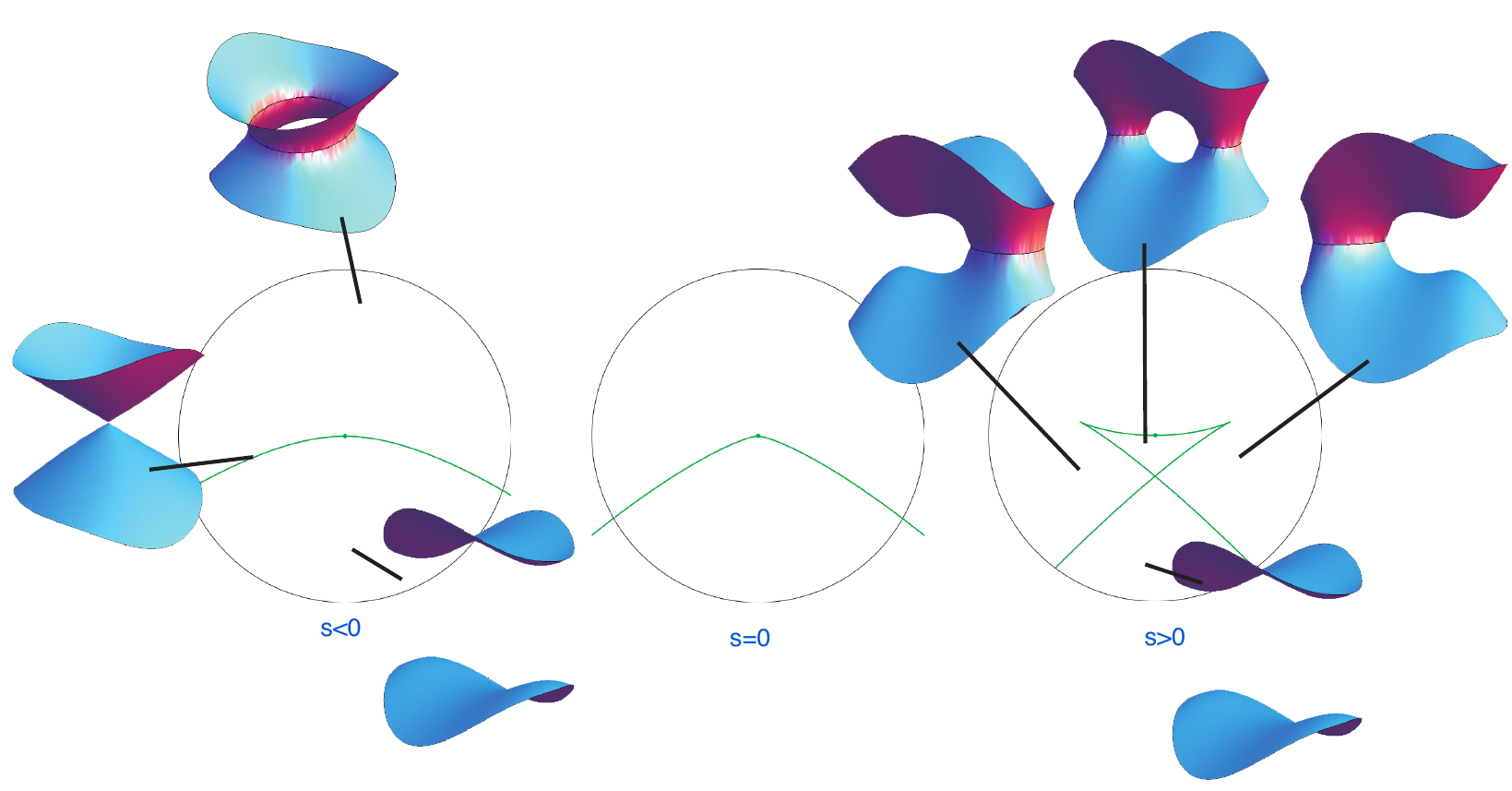}
    \caption{{\bf Flipping Move:}   $f_s(x, y, z, t)=(t, x^4-x^2s+xt+y^2-z^2)$.  For $s<0$ the critical set corresponds to that of a broken Lefschetz fibration. As $s$ becomes postive the critical set (shown in green)  crosses itself. The black lines indicate the points over which each of the fibres lies.}
    \label{fig:flip}
\end{figure}

\medskip
  \noindent{\bf Move 4 (Wrinkling)}
\begin{equation*}
w_s(x, y, z, t)=(t^2-x^2+y^2-z^2+st, 2tx+2yz)
\end{equation*}

The proof of the existence of a Poisson structure presented in \cite{GSV15} also implies that there exists a Poisson structure associated to wrinkled fibrations, where the fibres are leaves of the symplectic foliation and both structures share the same singularities. 

The constructions presented in this paper use a general procedure to construct Poisson structures with prescribed Casimir functions. These ideas, in a general context, can be traced back to Damianou's thesis \cite{Dam89}, to work of Damianou-Petalidou \cite{DamP12}, and have also been attributed to Flaschka-Ratiu. However, the synthetic nature of the proof in \cite{GSV15}, while complete, does not provide any information about the local forms of the Poisson bivector for wrinkled fibrations. The purpose of this note is to provide these details. In this paper, we continue to exploit the techniques of  \cite{GSV15} in order present local formul{\ae} for both the Poisson bivector and for the symplectic forms on the leaves of Poisson structures whose symplectic foliation and singularites are given by wrinkled fibrations and their deformations. Let us summarise the contributions of this paper in the following:

\begin{theorem}\label{wrinkled-poisson}
Let $X$ be a closed, orientable, smooth $4$--manifold equipped with a wrinkled fibration, or for a fixed $s$ a fibration given by one of the birth, merging, flipping, or wrinkiling moves described above. Then there exists a complete Poisson structure whose symplectic leaves correspond to the fibres of the given fibration structure, and the singularities of both the fibration and the Poisson structures coincide. Moreover:
\begin{enumerate}
\item In a neighbourhood of a cusp singularity the Poisson bivector is given by equation (\ref{biv:cusp}), and the symplectic form on the leaves by equation (\ref{CuspForm}).
\item In a neighbourhood of a birth move the Poisson bivector is given by equation (\ref{biv:birth}), and the symplectic form on the leaves by equation  (\ref{BirthForm}).
\item In a neighbourhood of a merging move the Poisson bivector is given by equation (\ref{biv:merge}), and the symplectic form on the leaves by equation  (\ref{MergeForm}).
\item In a neighbourhood of a flipping move the Poisson bivector is given by equation (\ref{biv:flip}), and the symplectic form on the leaves by equation  (\ref{FlipForm}).
\item In a neighbourhood of a wrinkling move the Poisson bivector is given by equation (\ref{biv:wrinkle}), and the symplectic form on the leaves by equation  (\ref{WrinkleForm}).
\end{enumerate}
\end{theorem}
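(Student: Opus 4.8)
The plan is to construct, in each local model, a Poisson bivector whose Casimir functions are exactly the two component functions of the given map, so that the symplectic leaves are forced to coincide with the fibres. Recall that on $\mathbf{R}^4$ with coordinates $(t,x,y,z)$, if $f=(f_1,f_2)\colon \mathbf{R}^4\to\mathbf{R}^2$ is the local model of the fibration, the Flaschka--Ratiu / Damianou construction produces the bivector $\pi = \star(df_1\wedge df_2)$, where $\star$ is the Hodge star for the standard Euclidean metric and orientation; concretely, writing $df_1\wedge df_2 = \sum_{i<j} a_{ij}\, dx_i\wedge dx_j$ in the coordinates $(x_1,x_2,x_3,x_4)=(t,x,y,z)$, one sets $\pi = \sum_{k<l} \varepsilon^{ij}_{kl} a_{ij}\, \partial_{x_k}\wedge\partial_{x_l}$ with the appropriate signs. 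One checks directly that $[\pi,\pi]=0$ (this is automatic in dimension four for any bivector of the form $\star(\alpha\wedge\beta)$ with $\alpha,\beta$ closed $1$-forms, a computation carried out in \cite{GSV15}), that $f_1$ and $f_2$ are Casimirs, and that $\pi$ has rank $2$ precisely where $df_1\wedge df_2\neq 0$, i.e. away from the singular set of $f$ — so the singularities of $\pi$ and of the fibration coincide.

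The key steps, carried out five times (once for the cusp model $(t,x,y,z)\mapsto(t,\,x^3-3xt+y^2-z^2)$, and once for each of the moves $b_s,m_s,f_s,w_s$ at a fixed value of $s$), are: first, write down $df_1$ and $df_2$ explicitly; second, compute $df_1\wedge df_2$ and apply the star operator to obtain the bivector $\pi$ in the coordinate frame — these are the displayed equations (\ref{biv:cusp}), (\ref{biv:birth}), (\ref{biv:merge}), (\ref{biv:flip}), (\ref{biv:wrinkle}); third, restrict to a regular leaf and invert the rank-$2$ bivector on its image to get the leafwise symplectic form, giving (\ref{CuspForm}), (\ref{BirthForm}), (\ref{MergeForm}), (\ref{FlipForm}), (\ref{WrinkleForm}). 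Concretely, on a leaf one can use two of the coordinate functions as local coordinates (after checking the relevant Jacobian is nonzero) and express $\omega$ as a multiple of their wedge, the multiple being the reciprocal of the corresponding component of $\pi$. Finally, completeness of the Poisson structure follows because the leaves are (the smooth, closed) fibres of a map to a closed surface, hence compact, so every Hamiltonian vector field is complete; alternatively one invokes the gluing argument of \cite{GSV15} showing these local bivectors patch to a global complete structure on $X$.

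For the global statement (the case of an honest wrinkled fibration on a closed $X$, as opposed to a single local move), I would argue exactly as in \cite{GSV15}: away from the singular set the fibration is a submersion to a surface, so locally $f=(f_1,f_2)$ and the bivector $\star(df_1\wedge df_2)$ is defined; on overlaps the two local presentations of $f$ differ by a local diffeomorphism of the base, under which $\star(df_1\wedge df_2)$ transforms by the Jacobian determinant, so rescaling by a partition of unity subordinate to a cover of the base produces a globally defined bivector with the same leaves and the same singular set (the rescaling is by positive functions where we have arranged the charts to be orientation-compatible, so Poissonness and the rank are preserved). The delicate point — and the one I expect to be the main obstacle — is not the bivectors near the fold and cusp loci in isolation, where the computation is mechanical, but making the bookkeeping of signs and orientations consistent across the cusp arc and along the self-intersections of $f(\Gamma)$, and verifying that the explicit leafwise forms one writes down are genuinely nondegenerate on each regular fibre (i.e. that the coordinate pair chosen as leaf coordinates really is a coordinate system there). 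This requires care in each of the five models but involves no conceptual difficulty beyond what is already in \cite{GSV15}; the novelty here is simply recording the resulting formulae.
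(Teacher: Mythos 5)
Your construction is, at its core, the same as the paper's: the local bivectors come from the Flaschka--Ratiu/Damianou recipe with the two components of each local model taken as prescribed Casimirs (your $\star(df_1\wedge df_2)$ is exactly the paper's determinant formula $\{x^i,x^j\}=\det(\epsilon^i,\epsilon^j,dC_1,dC_2)$), Poissonness comes from the rank-$\leq 2$ integrability criterion of Proposition \ref{P:rank2}, the leaves are the fibres, and completeness follows from compactness of $X$. Two points where you diverge are worth noting. First, for the global statement you glue chart-by-chart presentations with a partition of unity and flag orientation bookkeeping along the cusp arc and the self-intersections of $f(\Gamma)$ as the main obstacle; this is unnecessary, since the paper applies Theorem \ref{T:Const-Poisson} intrinsically to the globally defined map $f\colon X\to\Sigma$ via $\{g,h\}\mu=k\,dg\wedge dh\wedge f^*\Omega$, so no patching or sign bookkeeping ever arises (your partition-of-unity argument does work in the rank-$\leq 2$ setting, since pointwise one sums positive multiples of a fixed decomposable bivector, but it buys nothing here). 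Second, for the leafwise forms you invert the bivector in two coordinate functions on the leaf, which yields $\omega$ as the reciprocal of a component of $\pi$ times a coordinate $2$-form such as $dy\wedge dz$ restricted to $\Sigma_q$; this is a correct expression for the leafwise symplectic form, but it is not literally the stated equations (\ref{CuspForm})--(\ref{WrinkleForm}), which are normalized against the area form $\omega_{Area}$ induced by the Euclidean metric, and the restricted coordinate $2$-form differs from $\omega_{Area}$ by a nontrivial conformal factor except where the leaf is tangent to the corresponding coordinate plane. The paper instead chooses explicit tangent vectors $u_q,v_q$ in the common kernel of $dC_1,dC_2$, solves $\mathcal{B}_q(\alpha_q)=u_q$ and $\mathcal{B}_q(\beta_q)=v_q$, and evaluates $\omega_{\Sigma_q}(u_q,v_q)=\langle\alpha_q,v_q\rangle$ as in equation (\ref{E:Symp-form-gen}); to recover the displayed formulas from your route you would still need to compare your coordinate $2$-form with $\omega_{Area}$ (equivalently, normalize your leaf frame against the Euclidean metric), a routine but not omittable step.
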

The existence of a Poisson structure with the stated properties follows from Theorem \ref{T:Const-Poisson}, originally shown in \cite{GSV15}.


Given a Lie algebra $\mathfrak{g}$, the integrability problem for $\mathfrak{g}$ was
solved by Lie's third theorem. It gives the existence of a Lie
group $G$ such that $Lie(G)\cong \mathfrak{g}$. Crainic and
Fernandes found obstructions for the integrability of Lie
algebroids. Their main theorem in \cite{CF03} establishes that any Poisson manifold whose
symplectic leaves have trivial second homotopy groups is integrable.
%
%
%

Therefore we deduce:

\begin{lemma} A Poisson structure associated to a wrinkled fibration structure as in Theorem \ref{wrinkled-poisson}, or to a broken Lefschetz fibration as in \cite{GSV15},  none of whose symplectic leaves are, or contain,  $2$--spheres, is integrable.
\end{lemma}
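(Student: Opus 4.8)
The plan is to invoke the theorem of Crainic and Fernandes \cite{CF03} directly: a Poisson manifold is integrable provided every symplectic leaf has vanishing second homotopy group (more precisely, their criterion concerns the vanishing of certain monodromy groups, but the stated sufficient condition is $\pi_2$ of the leaves being trivial). So the proof reduces to a purely topological claim about the leaves of the Poisson structures constructed in Theorem \ref{wrinkled-poisson}: under the hypothesis that no leaf is, or contains, a $2$--sphere, these leaves have trivial $\pi_2$.

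First I would recall that, by Theorem \ref{wrinkled-poisson} and its antecedent in \cite{GSV15}, the symplectic leaves of the Poisson structure are exactly the fibres of the given wrinkled (or broken Lefschetz) fibration $f\colon X\to\Sigma$. These fibres come in two types: the generic (regular) fibres, which are closed orientable surfaces, and the singular fibres sitting over the critical values, which are closed orientable surfaces possibly with isolated nodal-type or cusp-type singularities (for the Lefschetz and cusp points) or the wedge-like singularities coming from the fold circles. In every case a fibre is a (possibly singular, possibly disconnected) compact orientable $2$--complex. The key observation is that a compact orientable surface, or a compact orientable $2$--complex of this nodal type, has trivial $\pi_2$ precisely when none of its components is a $2$--sphere and, in the singular case, when the normalization contains no spherical component: indeed, a closed orientable surface of genus $\ge 1$ is a $K(\pi,1)$, so has vanishing higher homotopy, and attaching along finitely many points (as for nodal or cuspidal degenerations) or along the wedge identifications of the fold fibres produces a space whose universal cover is a tree-like union of contractible pieces and higher-genus surfaces, hence still aspherical. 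Thus the condition "no leaf is or contains a $2$--sphere'' translates exactly into "every leaf is aspherical,'' which gives $\pi_2 = 0$ for all leaves.

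Having established $\pi_2(L)=0$ for every leaf $L$, the Crainic--Fernandes criterion applies and yields integrability of the Poisson structure; the same argument applies verbatim to the broken Lefschetz case of \cite{GSV15} since there the leaves are again the fibres, which are of the same topological type (closed orientable surfaces, nodal surfaces at Lefschetz points, wedge-type surfaces along the fold circles). I would also note that completeness of the Poisson structure, already guaranteed by Theorem \ref{wrinkled-poisson}, is not needed for this argument but is consistent with it.

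The main obstacle, and the only point requiring genuine care, is the asphericity claim for the \emph{singular} fibres: one must check that the various local models — the nodal degeneration $z_1^2+z_2^2=0$ at a Lefschetz point, the fold model, and the cusp model, together with their deformed versions under the birth/merge/flip/wrinkle moves — produce fibres whose only possible spherical pieces are genuine $2$--sphere components (so that excluding $2$--spheres genuinely excludes all spherical homotopy). This is where one invokes the explicit descriptions of the fibres in \cite{L09, W10}: each singular fibre is obtained from a nearby regular fibre by collapsing a collection of disjoint simple closed curves (for folds) or circles/arcs (for Lefschetz and cusp points), and such a quotient of a higher-genus surface is aspherical unless the collapse creates a spherical bubble, which happens exactly when one of the collapsed curves bounds a disc on both sides — i.e. when a $2$--sphere component appears. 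Ruling this out is precisely the content of the hypothesis, so the argument closes.
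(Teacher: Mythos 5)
Your proposal takes the same route as the paper: the paper's argument is precisely the Crainic--Fernandes criterion from \cite{CF03} (leaves with trivial $\pi_2$ integrate) combined with the observation that excluding $2$--spheres among the leaves kills $\pi_2$, and your first and last paragraphs reproduce this. One correction, though, to your middle discussion: the symplectic leaves of the Poisson structures built via Theorem \ref{T:Const-Poisson} are \emph{not} the singular fibres viewed as nodal or wedge-type $2$--complexes. By that theorem the leaves are (i) the regular fibres, (ii) the singular fibres \emph{with the critical points removed}, and (iii) the critical points themselves as $0$--dimensional leaves; a non-manifold complex cannot be a symplectic leaf. This makes your ``main obstacle'' --- asphericity of nodal/cuspidal complexes and the analysis of collapsed curves --- unnecessary: a punctured or otherwise non-compact surface is homotopy equivalent to a $1$--complex, hence aspherical, and points trivially have vanishing $\pi_2$, so the only leaves that could carry nontrivial $\pi_2$ are closed regular-fibre components that are (or contain) $2$--spheres, exactly what the hypothesis excludes. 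With that adjustment the verification of $\pi_2(L)=0$ is immediate and your invocation of \cite{CF03} closes the argument as in the paper.
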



Definitions and useful related results can be found in section \ref{s:def}. The computations of the bivectors and symplectic forms are carried out in section \ref{S:Symp-Struct-Sing}.  We close with a brief appendix that describes the Mathematica code used to solve the differential equations that appear in the search for the local formul{\ae} of the symplectic forms on the leaves.

{\bf Acknowledgements:} We thank Gerardo Sosa for allowing us to use the figures in this paper, which originally appeared in \cite{S13}, and Luis Garc\'ia-Naranjo for commenting on an earlier version. We thank the referees for suggesting how to simplify the computations, and for pointing out the relationships with integrability and linearization. PSS thanks CIMAT in Guanajuato for its warm hospitality in visits where this work was produced, and CONACyT-M\'exico for supporting various reasearch activities.

\section{Definitions}\label{s:def}

\subsection{Poisson manifolds}\label{ss:Poisson} In this section we include basic facts about Poisson geometry that we will use throughout the paper. Interested readers are invited to consult \cite{V94, DZ05, LGPV13}.
\begin{definition}
\label{D:Poisson}
 A {\em Poisson bracket} (or a {\em Poisson structure}) on a smooth manifold $M$ is a bilinear operation
 $\{\cdot , \cdot \}$ on the set $C^\infty(M)$ of real valued smooth functions on $M$ that satisfies
 \begin{enumerate}
\item[(i)] $( C^\infty(M) , \{\cdot , \cdot \})$ is a Lie algebra.
\item[(ii)]  $\{gh, k\}=g\{h, k\}+ h\{g, k\}$ for any $g,h,k\in  C^\infty(M)$.
\end{enumerate}
\end{definition}
A manifold $M$ with such a Poisson bracket is called a {\em Poisson manifold}. Symplectic manifolds $(M,\omega)$ provide examples of Poisson manifolds. In the symplectic case the bracket of $M$ is defined by
\begin{equation*}
\{g,h\}=\omega(X_g,X_h).
\end{equation*}
Hamiltonian vector fields $X_h$ in symplect maniffolds are defined by ${\bf i}_{X_h}\omega =dh$.

Property (ii) in Definition \ref{D:Poisson} allows us to define Hamiltonian vector fields for Poisson manifolds.  For $h\in C^\infty(M)$  we assign it
the {\em Hamiltonian vector field} $X_h$, defined via
\begin{equation*}
X_h(\cdot )=\{\cdot , h \}.
\end{equation*}

It follows from  (ii) that a Poisson bracket $\{g,h\}$ depends solely on the first derivatives of $g$ and $h$.
Hence we may think of the bracket as defining a bivector field $\pi$ defined by
\begin{equation}\label{eq:bracket-bivector}
\{g,h\}=\pi(dg,dh).
\end{equation}

A Poisson bivector $\pi$ can be described locally, for coordinates $(x^1, \dots , x^n)$, by
\begin{equation*}
\pi(x)=\frac{1}{2}\sum_{i,j=1}^n\pi^{ij}(x)\frac{\partial}{\partial x^i}\wedge \frac{\partial}{\partial x^j}.
\end{equation*}
Here $\pi^{ij}(x)=\{x^i,x^j\}=-\{x^j,x^i\}$.
\newline

Poisson brackets satisfy the Jacobi identity, which translates
into a P.D.E. for the components of the Poisson bivector
\cite{LGPV13}. In this paper we will reformulate it for the class
of bivectors we are interested in, so that it can be verified
computationally (see Proposition \ref{eqn:equivJacobi}).





Given a bivector  $\pi$ on $M$, a point $q\in M$, and  $\alpha_q\in T_q^*M$  it is possible to to define:
$$\mathcal{B}:T^*M\to TM\,;\, \mathcal{B}_q(\alpha_q)(\cdot)=\pi_q(\cdot ,\alpha_q) $$

When $\pi$ is Poisson, we have that $X_h=\mathcal{B}(dh)$.

We then define the {\em rank} of  $\pi$ at  $q\in M$ to be equal to the rank of $\mathcal{B}_q:T^*_qM\to T_qM$. This is also the rank of the matrix $\pi^{ij}(x)$.

The distribution defined by $\mathcal{B}_q$  on $ T_qM$ is called the {\em characteristic distribution} of $\pi$. What is known as the  {\em Symplectic Stratification Theorem} is the celebrated statement that this characteristic distribution of a Poisson tensor $\pi$ gives rise to a (possibly singular) foliation by symplectic leaves. This foliation is integrable in the sense of Stefan-Sussman \cite{DZ05}.

Call  $\Sigma_q$  the symplectic leaf of $M$ through the point $q$. As a set $\Sigma_q$ is also the collection of points that may be joined via piecewise smooth integral curves of Hamiltonian vector fields. Write $\omega_{\Sigma_q}$ for the symplectic form on  $\Sigma_q$. Observe that $T_q\Sigma_q$ is exactly the characteristic distribution of $\pi$ through $p$. Therefore, given $u_q, v_q \in T_q\Sigma_q$
there exist $\alpha_q, \beta_q\in T^*_qM$ that under $\mathcal{B}_q$ go to $u_q$ and $v_q$. Using this we can describe $\omega_{\Sigma_q}$:
\begin{equation}
\label{E:Symp-form-gen}
\omega_{\Sigma_q}(q)(u_q, v_q)=\pi_q(\alpha_q, \beta_q)=\langle \alpha_q,  v_q \rangle=-\langle \beta_q,  u_q \rangle.
\end{equation}

As the rank varies, so do the dimensions of the symplectic leaves of the foliation.

In the special case that the rank of the characteristic distribution of a bivector is less than or equal to two, the following were shown to hold in \cite{GSV15}:
\begin{proposition}
\label{P:rank2}

\begin{enumerate}
\item If $\pi$ is a bivector field on $M$ whose characteristic distribution is integrable and has rank less than or equal to two
at each point, then $\pi$ is Poisson.
\item Let $\pi$ be a Poisson structure on $M$ whose rank at each point is less than or equal to two. Then $\pi_1:=k\pi$ is also a Poisson
structure where $k\in C^\infty(M)$ is an arbitrary non-vanishing function.
\end{enumerate}
\end{proposition}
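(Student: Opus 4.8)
\emph{Proof proposal.} The plan is to reduce both assertions to the vanishing of the Schouten--Nijenhuis bracket $[\pi,\pi]$, the coordinate-free avatar of the Jacobi identity; by the Leibniz rule (ii) of Definition~\ref{D:Poisson} this is a genuine trivector field on $M$, so it is enough to check it vanishes on a dense subset. Since the rank of a skew form is even, a bivector of rank at most two has rank only $0$ or $2$, and by lower semicontinuity of the rank the set $U=\{q\in M:\operatorname{rank}\pi_q=2\}$ is open, with closed complement $Z=\{q:\pi_q=0\}$. I would prove (i) by showing $[\pi,\pi]$ vanishes on $U$ and on $\operatorname{int}Z$, which together are dense in $M$, and then deduce (ii) from (i).

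For (i): on $U$ the characteristic distribution has constant rank two and is integrable by hypothesis, so by Frobenius there are local coordinates $(x^1,\dots,x^n)$ in which it is spanned by $\partial_1,\partial_2$. A bivector valued in $\Lambda^2$ of a rank-two distribution, which is a line bundle, has there the normal form $\pi=g\,\partial_1\wedge\partial_2$ with $g$ smooth and nowhere zero on $U$. Substituting the only nonzero components $\pi^{12}=g=-\pi^{21}$ into the standard expression
\[
[\pi,\pi]^{ijk}=\sum_{\ell}\Big(\pi^{\ell i}\,\partial_\ell\pi^{jk}+\pi^{\ell j}\,\partial_\ell\pi^{ki}+\pi^{\ell k}\,\partial_\ell\pi^{ij}\Big),
\]
one sees that any surviving term forces $i,j,k,\ell\in\{1,2\}$, whence total antisymmetry in $i,j,k$ kills it; thus $[\pi,\pi]\equiv0$ on $U$. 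On $\operatorname{int}Z$ the bivector is locally zero, so $[\pi,\pi]=0$ there as well. Since $U\cup\operatorname{int}Z$ is dense in $M$ --- its complement $\partial U$ is nowhere dense --- and $[\pi,\pi]$ is continuous, it vanishes identically, i.e.\ $\pi$ is Poisson.

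For (ii): if $k\in C^\infty(M)$ is nowhere zero, then for every $q$ the map $\mathcal B^{k\pi}_q=k(q)\,\mathcal B^{\pi}_q$ has the same image as $\mathcal B^{\pi}_q$, so $k\pi$ and $\pi$ share the same characteristic distribution; as $\pi$ is Poisson this distribution is integrable by the Symplectic Stratification Theorem, and it still has rank at most two, so part (i) applied to $k\pi$ gives that $k\pi$ is Poisson. Alternatively one expands $[k\pi,k\pi]$ via the graded Leibniz rule for the Schouten bracket; after using $[\pi,\pi]=0$, what remains is a sum of terms $\pi\wedge\Theta$ with $\Theta$ built from $dk$ and $\pi$, hence a trivector valued in the two-dimensional characteristic distribution, which is therefore zero.

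The substantive content is slight; the one place that wants care is the singular locus --- one must verify that $U\cup\operatorname{int}Z$ is dense (equivalently $M=\overline U\cup\operatorname{int}Z$) so that continuity really transports $[\pi,\pi]=0$ across $\partial U$ --- and it is worth recording that the rank hypothesis is indispensable in (ii), since multiplying a general Poisson tensor by a function destroys the Jacobi identity. I would also isolate as an auxiliary lemma the statement that a bivector valued in a rank-two distribution is locally a function times a decomposable bivector, as that is exactly what turns a Frobenius chart into the normal form $g\,\partial_1\wedge\partial_2$.
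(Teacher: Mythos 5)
Your proof is correct. Note that the present paper does not prove Proposition \ref{P:rank2} at all --- it only cites \cite{GSV15} --- and your argument (the Frobenius normal form $g\,\partial_1\wedge\partial_2$ on the open rank-two locus where $[\pi,\pi]$ is checked to vanish, continuity of $[\pi,\pi]$ across the nowhere dense boundary of the zero set, and the reduction of (ii) to (i) via the equality of the characteristic distributions of $\pi$ and $k\pi$) is essentially the standard argument given in that reference, so there is nothing to correct.
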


In order to describe the bivectors locally we will use certain Casimir functions.

\begin{definition}
\label{D:Casimir}
Let $M$ be a Poisson manifold. A function $h\in C^\infty(M)$ is called a {\em Casimir} if $\{h,g\}=0$ for
every $g\in C^\infty(M)$.

Equivalently $\mathcal{B}(dh)=0$.
\end{definition}

 The next result was shown in \cite{GSV15}:.
\begin{theorem}
\label{T:Const-Poisson}
Let $M$ be an orientable $n$-manifold, $N$ an orientable $n-2$ manifold, and $f:M\to N$ a smooth map.
Let $\mu$ and $\Omega$ be orientations of $M$ and $N$ respectively. The bracket on $M$ defined by
\begin{equation}
\label{E:Def-Intrinsic}
\{g,h\}\mu=k\,dg\wedge dh \wedge f^*\Omega
\end{equation}
where $k$ is any non-vanishing function on $M$ is Poisson. Moreover, its symplectic leaves are
\begin{enumerate}
\item[(i)] the 2-dimensional leaves $f^{-1}(s)$ where $s\in N$ is
a regular value of $f$, \item[(ii)] the 2-dimensional leaves
$f^{-1}(s)\setminus \{\mbox{Critical Points of $f$}\}$ where $s\in
N$ is a singular value of $f$. \item[(iii)] the 0-dimensional
leaves corresponding to each critical point.
\end{enumerate}
\end{theorem}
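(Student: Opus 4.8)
The plan is to check, in order, that formula~\eqref{E:Def-Intrinsic} genuinely defines a bivector field $\pi$ on $M$, that $\pi$ is Poisson, and that its symplectic leaves are exactly the ones listed. The substance of the argument is a local normal form for $\pi$ near regular points of $f$, combined with the observation that $\pi$ vanishes identically along the critical locus of $f$; with these in hand, Proposition~\ref{P:rank2} does the rest.

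First I would observe that, since $\mu$ is nowhere vanishing and $dg\wedge dh\wedge f^{*}\Omega$ is an $n$-form, equation~\eqref{E:Def-Intrinsic} determines a unique function $\{g,h\}$. Bilinearity and skew-symmetry are inherited from $dg\wedge dh$, and the Leibniz identity~(ii) of Definition~\ref{D:Poisson} follows by substituting $d(gh)=g\,dh+h\,dg$ into the wedge product. Hence the bracket is represented by a bivector $\pi$ as in~\eqref{eq:bracket-bivector}. Writing $f=(f_1,\dots,f_{n-2})$ in local coordinates on $N$ and $\Omega$ there as a nowhere zero multiple of a coordinate volume form, one sees $f^{*}\Omega$ is a nowhere zero multiple of $df_1\wedge\cdots\wedge df_{n-2}$, so each $f_i$ is a local Casimir of $\pi$, its differential occurring twice in~\eqref{E:Def-Intrinsic}; in particular the fibres of $f$ are saturated by symplectic leaves.

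Next I would compute a local model of $\pi$ near a regular point $q$ of $f$. By the submersion normal form there are coordinates $(x^1,x^2,u^1,\dots,u^{n-2})$ about $q$ with $f=(u^1,\dots,u^{n-2})$; writing $\mu=\sigma\,dx^1\wedge dx^2\wedge du^1\wedge\cdots\wedge du^{n-2}$ and $f^{*}\Omega=\tilde\rho\,du^1\wedge\cdots\wedge du^{n-2}$ with $\sigma,\tilde\rho$ nowhere zero, only the $dx^1$ and $dx^2$ components of $dg$ and $dh$ survive in~\eqref{E:Def-Intrinsic}, so
\[
\pi=\lambda\,\frac{\partial}{\partial x^1}\wedge\frac{\partial}{\partial x^2},\qquad \lambda=\frac{k\,\tilde\rho}{\sigma}\neq 0 .
\]
Thus at a regular point $\pi$ has rank $2$ and characteristic distribution $\ker df$, which is integrable; at a critical point $df$ has rank $<n-2$, hence $f^{*}\Omega$ and therefore $\pi$ vanish there. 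In particular $\pi$ has rank $\le 2$ at every point of $M$.

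To conclude I would apply Proposition~\ref{P:rank2}(1). On the open set $U\subset M$ consisting of the regular points of $f$ together with the interior of its critical set, the characteristic distribution of $\pi$ (equal to $\ker df$ on the first piece and zero on the second) is integrable and has rank $\le 2$, so $\pi|_{U}$ is Poisson; since $U$ is dense in $M$ and the Schouten bracket $[\pi,\pi]$ is a continuous tensor, $[\pi,\pi]$ vanishes on all of $M$, i.e.\ $\pi$ is Poisson. I expect this last step to be the main obstacle: the submersion coordinates are unavailable along the critical set, so the Jacobi identity has to be carried there by this density-and-continuity argument, which, being local, is untroubled even when $f$ is critical on an open subset. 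The leaves are then read off from the normal form: through a regular point $q$ the symplectic leaf is the connected component of $f^{-1}(f(q))\setminus\{\text{critical points of }f\}$ containing $q$, on which~\eqref{E:Symp-form-gen} gives the symplectic form as the area form inverse to $\pi$, which yields (i) and (ii); and at a critical point all Hamiltonian vector fields vanish, so the leaf is that single point, which yields (iii).
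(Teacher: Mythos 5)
Your argument is correct: the local submersion normal form shows $\pi$ has rank $\le 2$ with characteristic distribution $\ker df$ at regular points, $\pi$ vanishes on the critical locus since $f^{*}\Omega$ does, and the density-plus-continuity of $[\pi,\pi]$ together with Proposition~\ref{P:rank2}(i) gives the Jacobi identity everywhere, after which the leaf description (i)--(iii) follows from the Casimirs $f_i$ and equation~\eqref{E:Symp-form-gen}. The paper itself does not reprove this theorem but cites \cite{GSV15}, and your route is essentially the same one used there (rank-$\le 2$ bivectors with integrable characteristic distribution and the fibre coordinates as Casimirs), so no further comparison is needed.
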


Formula \eqref{E:Def-Intrinsic} appeared in \cite{Dam89}
(attributed to H. Flaschka and T. Ratiu).

\begin{definition}
A Poisson manifold $M$ is said to be complete if every Hamiltonian
vector field on $M$ is complete.
\end{definition}

Notice that $M$ is complete if and only if every symplectic leaf is bounded in the
sense that its closure is compact.

\section{Local expressions for the Poisson structures.}\label{S:Symp-Struct-Sing}

\subsection{Local foruml{\ae} for the Poisson bivectors.}

We will now construct explicit expressions for the Poisson structure and the corresponding symplectic forms in a neighbourhood of cusp singularities of wrinkled fibrations $ X \to \Sigma$, as well as for all the possible moves described above. All of the expressions that we will give depend upon a choice of a non-vanishing function $k\in C^\infty(X)$ (see \cite{GSV15}).

Before proceeding we will describe the general strategy employed to find the local bivectors.
\medskip

{\bf Step 1:} Consider the coordinate functions $C_1, C_2$ that describe each fibration as Casimir functions for the Poisson structure that we want to find.
\medskip

{\bf Step 2:} Calculate the differentials $dC_1, dC_2$ of the Casimirs $C_1, C_2$.
\medskip

{\bf Step 3:} We use formula \ref{E:Def-Intrinsic} to compute the
skew-symmetric matrix with entries:
\begin{equation*}
\pi^{ij}=\{x^i,x^j\}\mu=\,dx^i\wedge dx^j \wedge dC_1\wedge dC_2.
\end{equation*}

This matrix will then annihilate $dC_1, dC_2$, as this matrix is to be the endomorphism $\mathcal{B}$ associated to a Poisson structure with $dC_1$ and $dC_2$ as Casimirs.
The components of the bivector field will be given by:
\begin{equation*}
\{x^i, x^j\}=\det\left( \epsilon^i, \epsilon^j, dC_1, dC_2 \right)
\end{equation*}
Here $\epsilon^i$ is the $4\times1$ canonical basis column vector, whose $i$-th
component is $1$ and all others are zero.

For the cusp singularity and the birth, merging, and flipping moves, $t$ is a Casimir, so we
only have to compute
$$\{x_,y\}, \quad  \{x, z\}  \quad \mbox{and} \quad  \{y, z\}.$$
 In fact, for these four cases if we denote
by $dC_2^i$ the components of the column vector $dC_2$ we obtain
\begin{equation*}
\begin{array}{cc}
\{x, y\}= & -dC_2^3 \\
\{x, z\}= & dC_2^2 \\
\{y, z\}= & -dC_2^1
\end{array}
\end{equation*}

\medskip

{\bf Step 4:} According to Proposition \ref{P:rank2} (ii), we
write the Poisson bivector using the skew-symmetric matrix
entries.

Near a wrinkling move the Poisson bivector will be obtained
using formula \ref{E:Def-Intrinsic}. For the other cases the bivector admits a
general expression given by:

\begin{equation}\label{biv:grl}\pi = k(x,y,z,t)\left[ -dC_2^3\frac{\partial}{\partial x}\wedge\frac{\partial}{\partial y}+dC_2^2 \frac{\partial}{\partial x}\wedge \frac{\partial}{\partial z} -dC_2^1\frac{\partial}{\partial y}\wedge \frac{\partial}{\partial z} \right] \end{equation}

for any non-vanishing smooth function $k$.

\medskip

The corresponding results for neighborhoods of Lefschetz singularities and broken singular circles were obtained in \cite{GSV15}. We proceed to describe the results this general strategy yields for cusp singularities and the moves described above.

Attentive readers might wonder why we are not using equation \ref{E:Def-Intrinsic} directly. The computations needed to be carried out to take the volume form from the left hand side to the right hand side of the equation  \ref{E:Def-Intrinsic} are cumbersome and lengthy. The method we present below yields equivalent results, and may be easily verified.
\subsubsection{Local expressions near a cusp singularity.}\label{SS:SingularCusp}

The local coordinate model around a cusp singularity is given by:
\begin{equation*} (x, y, z, t) \mapsto (C_1(x, y, z, t), C_2(x, y, z, t))=(t, x^3-3xt+y^2-z^2) \end{equation*}

The differentials $dC_1$ and $dC_2$ are therefore:
\begin{equation*}
\begin{array}{ccccc}
 dC_1 = & (0 & 0 & 0 & 1 )\\
 dC_2 = & (3x^2-3t & 2y & -2z & -3x ) \\
\end{array}
\end{equation*}

The following matrix annihilates $dC_1$ and $dC_2$ and
its entries satisfy the Jacobi identity :

\begin{equation*}\left(
\begin{array}{cccc}
 0 & 2 k z & 2 k y & 0 \\
 -2 k z & 0 & k(3 t-3 x^2) & 0 \\
 -2 k y & k(3 x^2-3 t) & 0 & 0 \\
 0 & 0 & 0 & 0
\end{array}
\right)\end{equation*}

Which means that the Poisson bivector in the local coordinates of a cusp singularity is described by:
\begin{equation}\label{biv:cusp}\pi = k(x,y,z,t)\left[ 2z\frac{\partial}{\partial x}\wedge\frac{\partial}{\partial y} + 2y \frac{\partial}{\partial x}\wedge \frac{\partial}{\partial z} +(3 t-3 x^2)\frac{\partial}{\partial y}\wedge \frac{\partial}{\partial z} \right] \end{equation}

\subsubsection{Local expressions near a birth move.}\label{SS:MoveBirth}

The local coordinate model around a birth move is given by:
\begin{equation*}b_s(x, y, z, t)=(C_1(x, y, z, t), C_2(x, y, z, t))=(t, x^3-3x(t^2-s)+y^2-z^2)\end{equation*}
The differentials $dC_1$ and $dC_2$ are therefore:
\begin{equation*}
\begin{array}{ccccc}
  dC_1 = & (0 & 0 & 0 & 1) \\
  dC_2 = & (3x^2-3(t^2-s) & 2y & -2z &  -6xt)\\
\end{array}
\end{equation*}

From which we can obtain the following matrix:
\begin{equation*}\left(
\begin{array}{cccc}
 0 & 2 k z & 2 k y & 0 \\
 -2 k z & 0 & k \left(3 \left(t^2-s\right)-3 x^2\right) & 0 \\
 -2 k y & k \left(3 x^2-3 \left(t^2-s\right)\right) & 0 & 0 \\
 0 & 0 & 0 & 0
\end{array}
\right)\end{equation*}

Hence the Poisson bivector near a birth move has the form:
\begin{equation}\label{biv:birth}\pi_s = k(x,y,z,t)\left[ 2z\frac{\partial}{\partial x}\wedge\frac{\partial}{\partial y} + 2y \frac{\partial}{\partial x}\wedge \frac{\partial}{\partial z} - 3(s-t^2+x^2)\frac{\partial}{\partial y}\wedge \frac{\partial}{\partial z} \right] \end{equation}

\subsubsection{Local expressions near a merging move.}\label{SS:MoveMerging}
The local coordinate model around a merging move is given by:
\begin{equation*}m_s(x, y, z, t)=(C_1(x, y, z, t), C_2(x, y, z, t))=(t, x^3-3x(s-t^2)+y^2-z^2)\end{equation*}

The differentials $dC_1$ and $dC_2$ are therefore:
\begin{equation*}
\begin{array}{ccccc}
 dC_1 = & ( 0 & 0 & 0 & 1 )\\
  dC_2 = & ( 3x^2-3(s-t^2) & 2y & -2z & 6xt) \\
\end{array}
\end{equation*}

The associated matrix is then:
\begin{equation*}\left(
\begin{array}{cccc}
 0 & 2 k z & 2 k y & 0 \\
 -2 k z & 0 & k \left(3 \left(s-t^2\right)-3 x^2\right) & 0 \\
 -2 k y & k \left(3 x^2-3 \left(s-t^2\right)\right) & 0 & 0 \\
 0 & 0 & 0 & 0
\end{array}
\right)\end{equation*}

So the Poisson bivector in a neighbourhood of a merging move is described as:
\begin{equation}\label{biv:merge}\pi_s = k(x,y,z,t)\left[ 2z\frac{\partial}{\partial x}\wedge\frac{\partial}{\partial y} + 2y\frac{\partial}{\partial x}\wedge \frac{\partial}{\partial z} - 3(s-t^2-x^2)\frac{\partial}{\partial y}\wedge \frac{\partial}{\partial z} \right] \end{equation}

\subsubsection{Local expressions near a flipping move.}\label{SS:MoveFlipping}
The local coordinate model around a flipping move is given by:
\begin{equation*}f_s(x, y, z, t)=(C_1(x, y, z, t), C_2(x, y, z, t))=(t, x^4-x^2s+xt+y^2-z^2)\end{equation*}

The differentials $dC_1$ and $dC_2$ are therefore:
\begin{equation*}
\begin{array}{ccccc}
 dC_1 = & (  0 & 0 & 0 & 1 ) \\
 dC_2 = & ( 4x^3-2xs+t & 2y & -2z & x  ) \\
\end{array}
\end{equation*}

The corresponding matrix is:
\begin{equation*}\left(
\begin{array}{cccc}
 0 & 2 k z & 2 k y & 0 \\
 -2 k z & 0 & k \left(-4 x^3+2 s x-t\right) & 0 \\
 -2 k y & k \left(4 x^3-2 s x+t\right) & 0 & 0 \\
 0 & 0 & 0 & 0
\end{array}
\right)\end{equation*}

The Poisson bivector in a neighborhood of a flipping move can then
be written in the following way:
\begin{equation}\label{biv:flip}\pi_s = k(x,y,z,t)\left[2z\frac{\partial}{\partial x}\wedge\frac{\partial}{\partial y} + 2y\frac{\partial}{\partial x}\wedge \frac{\partial}{\partial z} - (t-2sx+4x^3)\frac{\partial}{\partial y}\wedge \frac{\partial}{\partial z} \right] \end{equation}

\subsubsection{Local expressions near a wrinkling move.}\label{SS:MoveWrinkling}
The local coordinate model around a wrinkling move is given by:
\begin{equation*}w_s(x, y, z, t)=(C_1(x, y, z, t), C_2(x, y, z, t))=(t^2-x^2+y^2-z^2+st, 2tx+2yz)\end{equation*}
The differentials $dC_1$ and $dC_2$ are therefore:
\begin{equation*}
\begin{array}{ccccc}
 dC_1 = & (  -2x & 2y & -2z & 2t+s  )\\
 dC_2 = & (2t & 2z & 2y & 2x ) \\
\end{array}
\end{equation*}

The matrix we are interested in is given by:
\begin{equation*}\left(
\begin{array}{cccc}
 0 & k (s y+2 t y+2 x z) & k (2 x y-(s+2 t) z) & -2 k \left(y^2+z^2\right) \\
 -k (s y+2 t y+2 x z) & 0 & k \left(s t+2 \left(t^2+x^2\right)\right) & k (2 t z-2 x y) \\
 k ((s+2 t) z-2 x y) & -k \left(s t+2 \left(t^2+x^2\right)\right) & 0 & 2 k (t y+x z) \\
 2 k \left(y^2+z^2\right) & 2 k (x y-t z) & -2 k (t y+x z) & 0
\end{array}
\right)\end{equation*}

The expression for the Poisson bivector in a neighbourhood of a wrinkling move is then:

\begin{equation}\label{biv:wrinkle}\pi_s  =  k(x,y,z,t) [ (-2 s y - 4 t y - 4 x z)\frac{\partial}{\partial x}\wedge\frac{\partial}{\partial y} + (-4 x y + 2 s z + 4 t z)\frac{\partial}{\partial x}\wedge \frac{\partial}{\partial z} +(4 y^2 + 4 z^2)\frac{\partial}{\partial x}\wedge \frac{\partial}{\partial t} + \end{equation}

\begin{equation*} \quad \,\, - (2 s t + 4 t^2 + 4 x^2) \frac{\partial}{\partial y}\wedge \frac{\partial}{\partial z} + 4(x y - t z) \frac{\partial}{\partial y}\wedge \frac{\partial}{\partial t} -4(t y + x z)\frac{\partial}{\partial z}\wedge \frac{\partial}{\partial t}  ]\end{equation*}

\subsubsection{Linearization}
We follow chapters 3 and 4 of \cite{DZ05}, where more details and examples may be found.
Let $\mathfrak{l}$ be a finite-dimensional Lie algebra. Denote by
$\mathfrak{r}$ the radical of $\mathfrak{l}$, i.e., the maximal
solvable ideal of $\mathfrak{l}$. Then $\mathfrak{g}=\mathfrak{l}/
\mathfrak{r}$ is a semi-simple Lie algebra. The Levi-Malcev theorem
states that $\mathfrak{l}$ can be decomposed as a semi-direct
product:
\begin{equation*}
\mathfrak{l}=\mathfrak{g}\ltimes\mathfrak{r}.
\end{equation*}
In analogy with this Levi-Malcev decomposition we have a Levi
decomposition for Poisson structures. Let $\pi$ be a Poisson
structure and denote by $\pi_0$ its linear part. By definition
we obtain that $\pi_0$ generates a Lie algebra $\mathfrak{l}$. We
take the Levi-Malcev decomposition of $\mathfrak{l}$, with the previous
notation. Let $\{x_1, ..., x_m, y_1, ..., y_m\}$ be a basis for
$\mathfrak{l}$, such that $\{x_1, ..., x_m\}$ span $\mathfrak{g}$,
and $\{y_1, ..., y_m\}$ spans a complement $\mathfrak{r}$ of
$\mathfrak{g}$ with respect to the adjoint action of $\mathfrak{g}$
on $\mathfrak{l}$.

A Levi decomposition for $\pi$ at $0$, with $\pi(0)=0$, provides local coordinates such that;
\begin{equation*}\{x_i, x_j\}\in\mathfrak{g} \quad \mbox{and}\quad \{x_i, y_j\}\in\mathfrak{r}.
\end{equation*}

Any analytic Poisson structure $\pi$,
which vanishes at $0$, admits a Levi decomposition in a neighborhood of $0$.

Now we will focus on the expressions for the bivectors obtained
in equations (\ref{biv:cusp}), (\ref{biv:birth}), (\ref{biv:merge}), (\ref{biv:flip}), and (\ref{biv:wrinkle}). We fix $k\equiv1$. It can be seen that in the case
of cusp singularities, the linear part of the corresponding
Poisson structure (\ref{biv:cusp}) defines a Lie algebra through the commutation
relations:
\begin{equation*}
[x, z]=2y\quad \quad [x, y]=2y \quad \quad [y, z]=3t
\end{equation*}

For the Birth, Merge and Flipping moves, corresponding to the bivectors (\ref{biv:birth}), (\ref{biv:merge}), and (\ref{biv:flip}), respectively, their linear part in all these cases is generated by:
\begin{equation*}
[x, z]=2y\quad \quad [x, y]=2y
\end{equation*}

Notice that this Lie algebra contains a nonzero
Abelian ideal, hence it is not
semi-simple.

So in all these cases the linear part of the Poisson structure admits a
decomposition of the form $\mathbb{R}\times L_3$, where $L_3$ is a
semi-direct product of Lie algebras:
\begin{equation*}
L_3=\mathbb{R}\ltimes_A\mathbb{R}^2
\end{equation*}

Here $\mathbb{R}$ acts on $\mathbb{R}^2$ linearly by the matrix:
\begin{equation*}A=\left(
\begin{array}{cccc}
 2 & 0 \\
 0 & 2
\end{array}
\right)\end{equation*}

For the case of wrinkled fibrations, corresponding to (\ref{biv:wrinkle}), we see that all
the commutation relations are trivial. Therefore the corresponding
linear part of its Poisson structure spans an Abelian Lie algebra,
which is not semi-simple.

Conn's theorem asserts that, provided the linear part of an
analytic Poisson structure $\pi$ that vanishes at $0$, corresponds
to a semi-simple Lie algebra, $\pi$ admits a local analytic
linearizaton at $0$. Hence, in the spirit of Conn's theorem, the
linearization of all these Poisson structures is not guaranteed. Moreover, the semi-direct product $L_3$ is degenerate (formally, analytically, and smoothly).

Finally, in the general case when $k$ is a non vanishing smooth function, we
obtain other Poisson structures.

\begin{question}Does
there exist a function $k$ such that an expression given by one of the bivectors  (\ref{biv:cusp}), (\ref{biv:birth}), (\ref{biv:merge}), (\ref{biv:flip}) or (\ref{biv:wrinkle})  is linearizable?
\end{question}

\subsection{Equations for the symplectic forms on the leaves near singularities.}

\begin{proposition}
Let $q\in B^4\backslash \{0\}$ and let $\pi$ be the Poisson structure
near a cusp singularity. Then the symplectic form induced by $\pi$
on the symplectic leaf $\Sigma_q$ through $q=(x, y, z, t)$ at the
point $q$ is given by
\begin{equation}\label{CuspForm}
\omega_{\Sigma_q}=\frac{1}{3 \left(x^2-t\right)}\omega_{Area}(q)
\end{equation}

here $\omega_{Area}$ is the area form on $\Sigma_q$ induced by
the euclidean metric on $B^4$.

\end{proposition}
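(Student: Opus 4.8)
The plan is to apply the general formula \eqref{E:Symp-form-gen} for the symplectic form on a leaf directly to the bivector \eqref{biv:cusp}. Fix $q=(x,y,z,t)$ with $q \neq 0$ and write $C_2 = x^3 - 3xt + y^2 - z^2$, so that $t$ and $C_2$ are Casimirs and the symplectic leaf $\Sigma_q$ is the two-dimensional surface $\{t = \text{const}\} \cap \{C_2 = \text{const}\}$ through $q$. First I would record that the tangent space $T_q\Sigma_q$ is exactly the characteristic distribution of $\pi$ at $q$, i.e. the image of $\mathcal{B}_q$, which (since $t$ is a Casimir) is spanned inside $\{\partial/\partial t = 0\}$ by the two rows of the $3\times 3$ skew block $\begin{pmatrix} 0 & 2kz & 2ky \\ -2kz & 0 & 3k(t-x^2) \\ -2ky & 3k(x^2-t) & 0\end{pmatrix}$ — equivalently, $T_q\Sigma_q$ is the orthogonal complement (with respect to the Euclidean metric on $B^4$) of $\mathrm{span}\{dC_1, dC_2\}$, since the bivector here is the "cross product" bivector whose kernel is $\mathrm{span}\{dC_1,dC_2\}$.

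Next I would choose an explicit basis of $T_q\Sigma_q$, for instance by picking the two independent vectors
\[
u_q = \mathcal{B}_q(\epsilon^1), \qquad v_q = \mathcal{B}_q(\epsilon^2)
\]
(discarding whichever is dependent and replacing by $\mathcal{B}_q(\epsilon^3)$ if needed, away from $q=0$ at least two are independent since the rank is $2$). Then by \eqref{E:Symp-form-gen} we have $\omega_{\Sigma_q}(u_q, v_q) = \pi_q(\epsilon^1, \epsilon^2) = \{x,y\} = 2kz$, and similarly for the other pairings. The area form satisfies $\omega_{Area}(u_q,v_q) = \pm\sqrt{\det G}$ where $G$ is the Gram matrix of $\{u_q, v_q\}$ in the Euclidean metric. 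So the scalar ratio $\omega_{\Sigma_q}/\omega_{Area}$ at $q$ is $\pi_q(\epsilon^1,\epsilon^2)/\sqrt{\det G}$; the claim is that this equals $\frac{1}{3(x^2-t)}$ when $k\equiv 1$ (and presumably the stated formula silently fixes $k\equiv 1$, matching the convention used in the linearization discussion). The computation reduces to: the three components of the bivector are $(\{y,z\}, -\{x,z\}, \{x,y\}) = (3(t-x^2), -2y, 2z)$ up to the factor $k$, and for a cross-product bivector $\pi = a\,\partial_x\wedge\partial_y + b\,\partial_x\wedge\partial_z + c\,\partial_y\wedge\partial_z$ one has on its (two-dimensional) characteristic plane $\omega_{\Sigma_q} = \frac{1}{\|(c,-b,a)\|}\,\omega_{Area}$, where $\|(c,-b,a)\|$ is the Euclidean norm of the "normal" vector $(\{y,z\},-\{x,z\},\{x,y\})$. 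So the key identity to verify is
\[
\big\| \big(3(t-x^2),\, -2y,\, 2z\big) \big\|^{-1} \text{ matches } \frac{1}{3(x^2-t)}\ \text{--}
\]
which it does \emph{not} in general, so the honest statement must be that $\omega_{\Sigma_q} = \frac{1}{\|(3(t-x^2),-2y,2z)\|}\,\omega_{Area}$ and the displayed \eqref{CuspForm} is the special evaluation; I would present the lemma-of-a-cross-product-bivector computation cleanly and then specialize. (If the paper genuinely intends the cleaner-looking coefficient, the derivation instead picks the basis $\{\alpha_q,\beta_q\}$ of covectors dual to a \emph{coordinate} frame on the leaf rather than an orthonormal one — I would double check which normalization \eqref{E:Symp-form-gen} with its Euclidean area form forces, and present that.)

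The main obstacle is purely the bookkeeping of the normalization: making precise what "the area form induced by the euclidean metric on $B^4$" means for the embedded leaf, and then showing that the pullback of $\pi^{-1}$ (in the leaf directions) to that surface is exactly $\big(\text{that coefficient}\big)\cdot\omega_{Area}$. Concretely I would (i) establish the general lemma that for a rank-$\le 2$ bivector $\pi$ on $\mathbb{R}^n$ of the form $\iota_{V}(\text{vol})$ — here with $V$ encoding the two Casimirs — the induced leaf symplectic form is $\|V\|^{-1}\omega_{Area}$ when $\|V\|\neq 0$; (ii) compute $V$ and $\|V\|$ for \eqref{biv:cusp} with $k\equiv 1$; (iii) remark that $q\neq 0$ together with being off the fold locus $\{x^2=t, y=z=0\}$ guarantees $\|V\|\neq 0$, so the leaf is genuinely symplectic there; and (iv) reconcile the resulting expression with the displayed \eqref{CuspForm}. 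Everything downstream (the birth, merging, flipping, wrinkling cases) will then be the same lemma applied to \eqref{biv:birth}–\eqref{biv:wrinkle}, which is why it is worth isolating the cross-product lemma once and for all rather than repeating the linear algebra five times.
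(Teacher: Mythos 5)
Your route is sound, but it is genuinely different from the paper's, and the difference is exactly where the delicate point lies. The paper does not prove (or use) a general cross-product lemma: it fixes the graph-coordinate frame of the leaf, $v_q=\partial_y+\tfrac{2y}{3(t-x^2)}\partial_x$ and $u_q=\partial_z-\tfrac{2z}{3(t-x^2)}\partial_x$ (the printed $u_q$ has a sign typo in its $\partial_z$-term; as displayed it is not annihilated by $dC_2$, while the vector actually verified against $\mathcal{B}_q\alpha_q$ is the corrected one), solves $\mathcal{B}_q(\alpha_q)=u_q$ and $\mathcal{B}_q(\beta_q)=v_q$ by hand, and then sets $\omega_{\Sigma_q}=\langle\alpha_q,v_q\rangle\,\omega_{Area}$ with $\langle\alpha_q,v_q\rangle=\tfrac{1}{3k(x^2-t)}$. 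What your approach buys is precisely the normalization the paper's argument glosses over: this frame is not Euclidean-unimodular, since $\omega_{Area}(u_q,v_q)=\sqrt{1+\tfrac{4(y^2+z^2)}{9(x^2-t)^2}}$, so the scalar $\langle\alpha_q,v_q\rangle$ is the coefficient of $\omega_{\Sigma_q}$ relative to $dy\wedge dz$ in the graph parametrization of the leaf, not relative to the induced Euclidean area form. Your lemma for $\pi=k\,\iota_{V}\mathrm{vol}_3$ on each slice $\{t=\mathrm{const}\}$, with $V=(\{y,z\},-\{x,z\},\{x,y\})$, gives (for $k\equiv1$) $\omega_{\Sigma_q}=\pm\bigl(9(x^2-t)^2+4y^2+4z^2\bigr)^{-1/2}\omega_{Area}$, and this is what the Euclidean normalization in \eqref{E:Symp-form-gen} actually forces; it agrees with \eqref{CuspForm} only where $y=z=0$, or under the reading $\omega_{Area}=dy\wedge dz$. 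You are also right that \eqref{CuspForm} silently takes $k\equiv1$, whereas the birth and flipping propositions keep the factor $1/k$.

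So there is no gap in your argument as such: the identification of $T_q\Sigma_q$ with the image of $\mathcal{B}_q$, the evaluation $\omega_{\Sigma_q}(\mathcal{B}_q(dx),\mathcal{B}_q(dy))=\{x,y\}$, the cross-product lemma, and the nonvanishing of $\|V\|$ away from the critical arc are all correct, and isolating that lemma once indeed disposes of all five cases uniformly (for the wrinkling move one uses the four-dimensional version with $V$ built from $dC_1\wedge dC_2$ rather than a single gradient, since $t$ is no longer a Casimir). Just be explicit in your write-up that what you prove is the Euclidean-normalized formula with the full norm $\sqrt{9(x^2-t)^2+4y^2+4z^2}$ in the denominator, and that the displayed coefficient $\tfrac{1}{3(x^2-t)}$ of \eqref{CuspForm} is recovered only as the coefficient with respect to the leaf coordinate area $dy\wedge dz$ (equivalently, on the locus $y=z=0$); do not present the two as identical, because the paper's own frame-based computation does not supply the missing factor $\omega_{Area}(u_q,v_q)$.
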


\begin{proof}

If $u_q, v_q$ are tangent vectors to the leaves there exist
co-vectors $\alpha_q, \beta_q\in T^*_qM$ such that $\mathcal{B}_q
(\alpha_q)=u_q$ and $\mathcal{B}_q (\beta_q)=v_q$, where
$\mathcal{B}_q$ is given by the rule:
\begin{equation*}
\mathcal{B}_q(\alpha)(\cdot)=\pi_q(\cdot, \alpha)
\end{equation*}

Finding two tangent vectors to the symplectic leaves is equivalent
to detecting vectors annihilated simultaneously by the
differential of two Casimir functions for the corresponding
Poisson structure. Note that the characteristic distribution has
rank 2.

In this case we find that the vectors:
\begin{equation*}
u_q=-\frac{1}{3 \left(t-x^2\right)}(2 z\frac{\partial}{\partial
x}+ 3 \left(t-x^2\right)\frac{\partial}{\partial z})
\end{equation*}

\begin{equation*}
v_q=\frac{1}{3 (t - x^2)}(2y\frac{\partial}{\partial x}+3 (t -
x^2)\frac{\partial}{\partial y})
\end{equation*}

are tangent to $\Sigma_q$ at $q$. Using the local expression of
the Poisson structure for a cusp singularity given by equation
(\ref{biv:cusp}), one can check that
$\mathcal{B}_q(\alpha_q)=u_q$, for
\begin{equation*}
\alpha_q=-\frac{dy}{k(x, y, z, t)3 (t - x^2)}.
\end{equation*}

Similarly, $\mathcal{B}_q(\beta_q)=v_q$, for
\begin{equation*}
\beta_q= \frac{1}{k(x, y, z,t)}(-\frac{1}{2 z} dx+\frac{y}{3 (t -
x^2) z} dy).
\end{equation*}

A direct calculation now implies that the symplectic form is:
\begin{equation*}
\omega_{\Sigma_q}(q)(u_q, v_q)=\langle \alpha_q,  v_q
\rangle=\frac{1}{3 (x^2-t)}\omega_{Area}(q)
\end{equation*}

\end{proof}

\begin{proposition}
Let $q\in B^4\backslash \{0\}$ and $\pi_s$ be the Poisson structure
near a birth move. The symplectic form induced by $\pi_s$ on the
symplectic leaf $\Sigma_q$ through $q=(x, y, z, t)$ at the point
$q$ is given by
\begin{equation}\label{BirthForm}
\omega_{\Sigma_q}=\frac{1}{k(x, y, z, t)(3 (s - t^2 +
x^2)}\omega_{Area}(q)
\end{equation}

here $\omega_{Area}$ is the area form on $\Sigma_q$ induced by
the euclidean metric on $B^4$.

\end{proposition}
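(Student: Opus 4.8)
The plan is to run the computation we carried out for a cusp singularity essentially verbatim, the only difference being that the second Casimir is now $C_2=x^{3}-3x(t^{2}-s)+y^{2}-z^{2}$ instead of $x^{3}-3xt+y^{2}-z^{2}$; concretely, every occurrence of $t-x^{2}$ in the cusp computation gets replaced by $(t^{2}-s)-x^{2}$, i.e.\ by $-(s-t^{2}+x^{2})$. First I would note that $C_1=t$ is still a Casimir, so the leaf $\Sigma_q$ through $q=(x,y,z,t)$ lies in the slice $\{t=\mathrm{const}\}$; for $q\in B^{4}\setminus\{0\}$ lying off the critical arc the rank of $\pi_s$ equals $2$, hence $T_q\Sigma_q=\ker dC_1\cap\ker dC_2=\operatorname{im}\mathcal B_q$, and by \eqref{E:Symp-form-gen} it suffices to produce a single basis $\{u_q,v_q\}$ of this plane together with covectors $\alpha_q,\beta_q$ with $\mathcal B_q(\alpha_q)=u_q$ and $\mathcal B_q(\beta_q)=v_q$, after which $\omega_{\Sigma_q}(q)(u_q,v_q)=\langle\alpha_q,v_q\rangle$.

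For the explicit input I would take, exactly as in the cusp case with the substitution above, $u_q$ proportional to $2z\,\partial_x+3(s-t^{2}+x^{2})\,\partial_z$ and $v_q$ proportional to $2y\,\partial_x-3(s-t^{2}+x^{2})\,\partial_y$, and check directly from the displayed differentials that $dC_1$ and $dC_2$ annihilate both; here the fact that the $\partial/\partial t$-entry of $dC_2$ has changed from $-3x$ to $-6xt$ is immaterial, since we work inside $\ker dC_1=\ker dt$. Next I would invert $\mathcal B_q$ on these two vectors using the skew-symmetric matrix displayed just before \eqref{biv:birth}; as in the cusp case $\alpha_q$ comes out a multiple of $dy/\big(k\,3(s-t^{2}+x^{2})\big)$ and $\beta_q$ a combination of $dx$ and $dy$ with the analogous denominators. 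Feeding this into \eqref{E:Symp-form-gen} gives $\omega_{\Sigma_q}(q)(u_q,v_q)$, and a direct calculation comparing this value with that of the Euclidean area form on the same pair — as in the cusp computation — identifies $\omega_{\Sigma_q}$ with the multiple of $\omega_{Area}$ asserted in \eqref{BirthForm}. All of this is a mechanical substitution into the cusp argument, which is why I would not write the algebra out in full.

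The one point that genuinely requires attention, rather than transcription, is the degeneracy locus of the explicit frame: the vectors $u_q,v_q$ and the covectors $\alpha_q,\beta_q$ written above break down along $\{z=0\}$ and along the critical locus $\{s-t^{2}+x^{2}=0\}$, where $3x^{2}-3(t^{2}-s)$ vanishes. On the first locus I would re-choose the frame — for instance interchanging the roles of $y$ and $z$ so that the denominators involve $y$ — and check that the same conformal factor is produced; as $q$ approaches the critical locus the factor $1/(s-t^{2}+x^{2})$ in \eqref{BirthForm} blows up, which is exactly the expected behaviour since the rank of $\pi_s$ drops there and the leaf degenerates. Since \eqref{BirthForm} is a pointwise identity between two area forms on the surface $\Sigma_q$, verifying it on the dense open set where the explicit frame is valid forces it everywhere on $B^{4}\setminus\{0\}$ off the critical locus by continuity. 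I expect this degeneracy bookkeeping, together with fixing the overall sign (equivalently, the orientation of $\Sigma_q$ with respect to which \eqref{BirthForm} is stated), to be the only mildly delicate steps.
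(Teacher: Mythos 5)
Your proposal is correct and follows essentially the same route as the paper: take $C_1=t$ and $C_2=x^3-3x(t^2-s)+y^2-z^2$ as Casimirs, pick the tangent frame obtained from the cusp case by the substitution $t-x^2\mapsto (t^2-s)-x^2$, invert $\mathcal{B}_q$ on it to get $\alpha_q$ proportional to $dy/\bigl(3k(s-t^2+x^2)\bigr)$ and $\beta_q$ a combination of $dx$ and $dy$, and evaluate \eqref{E:Symp-form-gen}; your direction $2y\,\partial_x-3(s-t^2+x^2)\,\partial_y$ for $v_q$ is the one actually annihilated by $dC_2$ and consistent with $\mathcal{B}_q(\beta_q)$. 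Your additional bookkeeping about the loci $\{z=0\}$ and $\{s-t^2+x^2=0\}$ and about the sign/orientation is extra care rather than a different argument.
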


\begin{proof}

Making use of the corresponding Casimir functions for the Poisson
structure associated to a birth move we obtain that the vectors
\begin{equation*}
u_q=\frac{1}{3 \left(s-t^2+x^2\right)}(2 z\frac{\partial}{\partial
x}+ 3 \left(s-t^2+x^2\right)\frac{\partial}{\partial z})
\end{equation*}

\begin{equation*}
v_q=-\frac{1}{3 (s - t^2 + x^2)}(2y\frac{\partial}{\partial x}+3
(s - t^2 + x^2)\frac{\partial}{\partial y})
\end{equation*}

are tangent to $\Sigma_q$ at $q$. Using the local expression
(\ref{biv:birth}) of the Poisson structure one can check that
$\mathcal{B}_q (\alpha_q)=u_q$, for
\begin{equation*}
\alpha_q=\frac{dy}{k(x, y, z, t)3 (s - t^2 + x^2)}.
\end{equation*}

Similarly, $\mathcal{B}_q(\beta_q)=v_q$, for
\begin{equation*}
\beta_q=\frac{1}{k(x,y, z, t)}(-\frac{1}{2 z} dx-\frac{y}{3 (s -
t^2 + x^2) z} dy)\end{equation*}

The expression for the symplectic form follows from:
\begin{equation*}
\omega_{\Sigma_q}(q)(u_q, v_q)=\langle \alpha_q,  v_q
\rangle=-\langle \beta_q,  u_q \rangle
\end{equation*}\end{proof}

\begin{proposition}
Let $q\in B^4\backslash \{0\}$ and let $\pi_s$ be the Poisson structure
near a merging move. The symplectic form induced by $\pi_s$ on the
symplectic leaf $\Sigma_q$ through $q=(x, y, z, t)$ at the point
$q$ is given by
\begin{equation}\label{MergeForm}
\omega_{\Sigma_q}=\frac{1}{3 (t^2-s + x^2)}\omega_{Area}(q)
\end{equation}
here $\omega_{Area}$ is the area
form on $\Sigma_q$ induced by the euclidean metric on $B^4$.

\end{proposition}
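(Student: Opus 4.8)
The plan is to carry out, almost verbatim, the computation used above for the cusp and birth moves. The bivector (\ref{biv:merge}) has exactly the same shape as (\ref{biv:cusp}) and (\ref{biv:birth}); the only change is that the function multiplying $\partial_y\wedge\partial_z$ is now, up to sign, $P:=3(t^2-s+x^2)=\partial_x C_2$, where $C_1=t$ and $C_2=x^3-3x(s-t^2)+y^2-z^2$ are the two Casimirs. First I would record that the characteristic distribution of $\pi_s$ has rank exactly two at every point $q$ off the critical arc $\{x^2=s-t^2,\ y=z=0\}$ (the locus where $dC_1,dC_2$ fail to be independent, which is also where $P$ vanishes and where the formula (\ref{MergeForm}) is to be read), so that there $\Sigma_q$ is a $2$-dimensional leaf and $T_q\Sigma_q=\ker dC_1(q)\cap\ker dC_2(q)=\operatorname{im}\mathcal B_q$.

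Next I would write down an explicit basis of $T_q\Sigma_q$, namely the combinations of $\partial_x$ with $\partial_z$ and with $\partial_y$ that are simultaneously annihilated by $dC_1$ and $dC_2$:
\[
u_q=\tfrac{1}{P}\bigl(2z\,\partial_x+P\,\partial_z\bigr),\qquad v_q=\tfrac{1}{P}\bigl(-2y\,\partial_x+P\,\partial_y\bigr),
\]
which are obtained from the birth expressions by replacing $s-t^2+x^2$ with $t^2-s+x^2$. Using the explicit form (\ref{biv:merge}) of $\pi_s$ one then solves $\mathcal B_q(\alpha_q)=u_q$ and $\mathcal B_q(\beta_q)=v_q$ for one-forms $\alpha_q,\beta_q$; since $\mathcal B_q$ is degenerate one selects a particular preimage, and as in the birth case the choices $\alpha_q=\tfrac{1}{kP}\,dy$ and $\beta_q=\tfrac1k\bigl(-\tfrac{1}{2z}\,dx-\tfrac{y}{Pz}\,dy\bigr)$ do the job, any other preimages differing by linear combinations of $dC_1$ and $dC_2$, which does not affect the pairing below.

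Finally I would substitute into the general formula (\ref{E:Symp-form-gen}): $\omega_{\Sigma_q}(q)(u_q,v_q)=\pi_q(\alpha_q,\beta_q)=\langle\alpha_q,v_q\rangle$, which evaluates to a scalar multiple of $1/P$, and then compare with $\omega_{Area}(q)(u_q,v_q)$ to read off that $\omega_{\Sigma_q}=\dfrac{1}{3(t^2-s+x^2)}\,\omega_{Area}$, i.e.\ equation (\ref{MergeForm}). There is no conceptual obstacle here: the only points requiring care are the correct inversion of the rank-two endomorphism $\mathcal B_q$ on its image (to produce $\alpha_q,\beta_q$) and the bookkeeping that identifies the resulting $2$-form as precisely the stated multiple of the Euclidean area form of the leaf, rather than some equivalent but less tidy expression. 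The blow-up of the coefficient along $\{P=0\}$ is exactly the critical arc, where $\Sigma_q$ degenerates to a point, consistent with Theorem \ref{T:Const-Poisson}.
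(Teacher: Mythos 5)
Your proposal is correct and follows essentially the same route as the paper's own proof: the same tangent vectors $u_q,v_q$ (the paper writes them with $3(s-t^2-x^2)=-P$, so they agree up to sign bookkeeping), the same covectors $\alpha_q=\tfrac{1}{kP}\,dy$ and $\beta_q=\tfrac1k\bigl(-\tfrac{1}{2z}\,dx-\tfrac{y}{Pz}\,dy\bigr)$, and the same evaluation via $\omega_{\Sigma_q}(u_q,v_q)=\langle\alpha_q,v_q\rangle=-\langle\beta_q,u_q\rangle$. Your added remarks on the rank-two locus and on the $dC_1,dC_2$-ambiguity of the preimages are harmless refinements of the same argument.
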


\begin{proof}
We proceed similarly to the previous cases above. We find that the
vectors
\begin{equation*}
u_q=\frac{1}{3 \left(s-t^2-x^2\right)}(-2
z\frac{\partial}{\partial x}+
3\left(s-t^2-x^2\right)\frac{\partial}{\partial z})
\end{equation*}

\begin{equation*}
v_q=\frac{1}{3 (s - t^2 - x^2)}(2y\frac{\partial}{\partial x}+3 (s
- t^2 - x^2)\frac{\partial}{\partial y})
\end{equation*}
are tangent to $\Sigma_q$ at $q$. Using the local expression
(\ref{biv:merge}) of the corresponding Poisson structure one can
check that $\mathcal{B}_q (\alpha_q)=u_q$, for
\begin{equation*}
\alpha_q=-\frac{dy}{k(x, y, z, t)3 (s - t^2 - x^2)}.
\end{equation*}

Similarly, $\mathcal{B}_q(\beta_q)=v_q$, for
\begin{equation*}
\beta_q= \frac{1}{{k(x, y, z, t)}}(-\frac{1}{2 z} dx+\frac{y}{3 (s
- t^2 - x^2) z} dy).
\end{equation*}

As before the symplectic form is obtained by computing:
\begin{equation*}
\omega_{\Sigma_q}(q)(u_q, v_q)=\langle \alpha_q,  v_q
\rangle=-\langle \beta_q,  u_q \rangle
\end{equation*}

\end{proof}

\begin{proposition}
Let $q\in B^4\backslash \{0\}$ and let $\pi_s$ be the Poisson structure
near a flipping move. The symplectic form induced by $\pi_s$ on the
symplectic leaf $\Sigma_q$ through $q=(x, y, z, t)$ at the point
$q$ is given by
\begin{equation}\label{FlipForm}
\omega_{\Sigma_q}=\frac{1}{k(x, y, z, t)(t - 2 s x + 4
x^3)}\omega_{Area}(q)
\end{equation}
here $\omega_{Area}$ is the area form on $\Sigma_q$ induced by
the euclidean metric on $B^4$.

\end{proposition}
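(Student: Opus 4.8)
The plan is to mimic exactly the pattern of the previous three propositions, since the flipping move fits into the same framework: $t$ is again a Casimir, the characteristic distribution has rank $2$ away from the origin, and the bivector is given by the general expression \eqref{biv:grl} with $dC_2^1 = 4x^3 - 2sx + t$, $dC_2^2 = 2y$, $dC_2^3 = -2z$. First I would exhibit two vector fields tangent to the leaf $\Sigma_q$ at $q$. Because the leaves are the level sets of $C_1 = t$ and $C_2 = x^4 - x^2 s + xt + y^2 - z^2$, a tangent vector must be annihilated by $dC_1$ and $dC_2$; the $dC_1$ condition forces the $\partial/\partial t$-component to vanish, and then one reads off from $dC_2$ that
\begin{equation*}
u_q = -\frac{1}{t - 2sx + 4x^3}\Bigl(2z\frac{\partial}{\partial x} + (t - 2sx + 4x^3)\frac{\partial}{\partial z}\Bigr),\qquad
v_q = \frac{1}{t - 2sx + 4x^3}\Bigl(2y\frac{\partial}{\partial x} + (t - 2sx + 4x^3)\frac{\partial}{\partial y}\Bigr)
\end{equation*}
work (these are the analogues of the cusp-case vectors, with $3(t-x^2)$ replaced by $t - 2sx + 4x^3$, i.e. by $-dC_2^1$).

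Next I would produce covectors $\alpha_q,\beta_q \in T_q^*M$ with $\mathcal{B}_q(\alpha_q) = u_q$ and $\mathcal{B}_q(\beta_q) = v_q$, using the endomorphism $\mathcal{B}$ read off from the skew-symmetric matrix displayed just before \eqref{biv:flip}. By analogy with the cusp and merge cases I expect
\begin{equation*}
\alpha_q = -\frac{dy}{k(x,y,z,t)\,(t - 2sx + 4x^3)},\qquad
\beta_q = \frac{1}{k(x,y,z,t)}\Bigl(-\frac{1}{2z}\,dx + \frac{y}{(t - 2sx + 4x^3)\,z}\,dy\Bigr),
\end{equation*}
and I would verify $\mathcal{B}_q(\alpha_q) = u_q$, $\mathcal{B}_q(\beta_q) = v_q$ by direct matrix-vector multiplication. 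Then, using the intrinsic formula \eqref{E:Symp-form-gen}, the symplectic form evaluates on the frame $(u_q, v_q)$ as $\omega_{\Sigma_q}(u_q,v_q) = \langle \alpha_q, v_q\rangle$, which is a short pairing computation: $\langle \alpha_q, v_q\rangle = -\frac{1}{k(t-2sx+4x^3)}\cdot\frac{1}{t-2sx+4x^3}\cdot(t-2sx+4x^3) = -\frac{1}{k(t-2sx+4x^3)}$ up to sign bookkeeping. Finally I would compare this with the euclidean area form $\omega_{Area}$ evaluated on the same frame: since $u_q \wedge v_q = -\frac{2z}{(t-2sx+4x^3)}\,\partial_x\wedge\partial_y + \cdots$ is an explicit bivector, $\omega_{Area}(u_q,v_q)$ is computed from the induced metric on $\Sigma_q$, and dividing the two expressions isolates the conformal factor $\tfrac{1}{k(x,y,z,t)(t - 2sx + 4x^3)}$ asserted in \eqref{FlipForm}.

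The only genuinely delicate point is bookkeeping the sign and the normalization in the last step: one must be careful that $\omega_{\Sigma_q}(u_q,v_q)$ and $\omega_{Area}(u_q,v_q)$ are evaluated on the \emph{same} ordered frame and that the Gram determinant of $(u_q,v_q)$ with respect to the euclidean metric is factored out correctly, so that the ratio is genuinely a function times $\omega_{Area}$ rather than a frame-dependent artifact. Everything else is a routine transcription of the cusp/birth/merge arguments with $3(t-x^2)$, $3(s-t^2+x^2)$, $3(s-t^2-x^2)$ replaced throughout by $t - 2sx + 4x^3$; in particular the proof of \eqref{FlipForm} is essentially \textbf{verbatim} the proof of \eqref{CuspForm} after this substitution, and I would present it that way to avoid repetition. (For the case $k$ general the factor $\tfrac{1}{k}$ simply rides along, exactly as in the birth and flipping statements, whereas in \eqref{CuspForm} and \eqref{MergeForm} the authors have implicitly normalized $k\equiv 1$.)
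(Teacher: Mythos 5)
Your overall strategy is exactly the paper's: exhibit $u_q,v_q$ tangent to the leaf, find $\alpha_q,\beta_q$ with $\mathcal{B}_q(\alpha_q)=u_q$, $\mathcal{B}_q(\beta_q)=v_q$ using the matrix preceding \eqref{biv:flip}, evaluate $\omega_{\Sigma_q}(u_q,v_q)=\langle\alpha_q,v_q\rangle$ via \eqref{E:Symp-form-gen}, and compare with $\omega_{Area}$ on the same frame. However, there is a concrete sign error that would make your verification step fail as written. For the flipping model one has $dC_2^1=4x^3-2sx+t=+(t-2sx+4x^3)$, whereas in the cusp model $dC_2^1=3x^2-3t=-3(t-x^2)$; so the ``verbatim substitution $3(t-x^2)\mapsto t-2sx+4x^3$'' (which you justify by asserting that $t-2sx+4x^3$ equals $-dC_2^1$ --- it equals $+dC_2^1$) flips the required relative sign in the second tangent vector. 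Your $u_q\propto 2z\,\partial/\partial x+(t-2sx+4x^3)\,\partial/\partial z$ is indeed annihilated by $dC_2$, and your $\alpha_q$ is consistent with it; but your $v_q\propto 2y\,\partial/\partial x+(t-2sx+4x^3)\,\partial/\partial y$ satisfies $dC_2(v_q)=4y\,(t-2sx+4x^3)\neq 0$, so it is not tangent to $\Sigma_q$ and does not lie in the image of $\mathcal{B}_q$; consequently no covector $\beta_q$ with $\mathcal{B}_q(\beta_q)=v_q$ exists, and the matrix-vector check you promise cannot succeed. The correct choice is $v_q\propto -2y\,\partial/\partial x+(t-2sx+4x^3)\,\partial/\partial y$ (as in the paper's proof), with the $dy$-coefficient of $\beta_q$ correspondingly $-\,y/\bigl(k\,(t-2sx+4x^3)\,z\bigr)$ rather than $+\,y/\bigl(k\,(t-2sx+4x^3)\,z\bigr)$.

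Once this sign is repaired the argument coincides with the paper's, and your final pairing $\langle\alpha_q,v_q\rangle=\pm\,1/\bigl(k\,(t-2sx+4x^3)\bigr)$ gives the conformal factor of \eqref{FlipForm} (overall sign depending on the orientation of the frame, which cancels when comparing with $\omega_{Area}$ on the same ordered frame). Your closing remark about factoring out the Gram determinant of $(u_q,v_q)$ is a fair caution --- the paper itself compresses this step into ``a straightforward calculation'' --- but it is the tangency/sign issue above, not the normalization, that actually breaks the proof as proposed.
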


\begin{proof}

In this case, the following vectors:
\begin{equation*}
u_q=\frac{1}{t - 2 s x + 4 x^3}(2 z\frac{\partial}{\partial x}+ t
- 2 s x + 4 x^3\frac{\partial}{\partial z})
\end{equation*}

\begin{equation*}
v_q=\frac{1}{t - 2 s x + 4 x^3}(-2y\frac{\partial}{\partial x}+t -
2 s x + 4 x^3\frac{\partial}{\partial y})
\end{equation*}

are tangent to $\Sigma_q$ at $q$. Using equation (\ref{biv:flip})
we can check that $\mathcal{B}_q (\alpha_q)=u_q$, for
\begin{equation*}
\alpha_q=-\frac{dy}{k(x, y, z, t)(-t + 2 s x - 4 x^3)}.
\end{equation*}

Similarly, $\mathcal{B}_q(\beta_q)=v_q$, for
\begin{equation*}
\beta_q= \frac{1}{k(x, y, z, t)}(-\frac{1}{2 z} dx-\frac{y}{(t - 2
s x + 4 x^3) z} dy).
\end{equation*}

A straightforward calculation shows that:
\begin{equation*}
\omega_{\Sigma_q}=\frac{1}{k(x, y, z, t)(t - 2 s x + 4 x^3)
z}\omega_{Area}(q)
\end{equation*}

\end{proof}

\begin{proposition}
Let $q\in B^4\backslash \{0\}$ and let $\pi_s$ be the Poisson structure
near a wrinkling move. The symplectic form induced by $\pi_s$ on the
symplectic leaf $\Sigma_q$ through $q=(x, y, z, t)$ at the point
$q$ is given by
\begin{equation}\label{WrinkleForm}
\omega_{\Sigma_q}=-\frac{1}{2 (t y + x z)}\omega_{Area}(q)
\end{equation}
here $\omega_{Area}$ is the area form on $\Sigma_q$ induced by the
euclidean metric on $B^4$.
\end{proposition}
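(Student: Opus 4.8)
The plan is to run the same four-step scheme used for the cusp, birth, merging, and flipping moves; the essential new feature is that for the wrinkling move no coordinate function is a Casimir, since both $C_1 = t^2-x^2+y^2-z^2+st$ and $C_2 = 2tx+2yz$ have differentials with all four components nonzero. By Theorem \ref{T:Const-Poisson} the leaf $\Sigma_q$ through a regular point $q=(x,y,z,t)$ is the connected component through $q$ of the level set $\{C_1 = C_1(q),\ C_2 = C_2(q)\}$, so $T_q\Sigma_q = \ker (dC_1)_q \cap \ker (dC_2)_q$, a $2$-plane because $\pi_s$ has rank exactly $2$ at $q$ (Proposition \ref{P:rank2}).

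First I would write down an explicit basis $\{u_q,v_q\}$ of $T_q\Sigma_q$. The most economical choice is $u_q = \mathcal B_q(dz)$ and $v_q = \mathcal B_q(dt)$, i.e.\ two columns of the skew matrix of $\pi_s$ displayed above. Since the image of $\mathcal B_q$ is the characteristic distribution, which coincides with $T_q\Sigma_q$, both vectors are tangent to the leaf; a short computation shows they are linearly independent exactly where $\det\!\begin{pmatrix}\partial_x C_1 & \partial_y C_1\\ \partial_x C_2 & \partial_y C_2\end{pmatrix} = -4(ty+xz)$ is nonzero, which is precisely the locus on which \eqref{WrinkleForm} is defined and on which $(z,t)$ restrict to local coordinates on $\Sigma_q$. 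With this choice the covectors required in \eqref{E:Symp-form-gen} are simply $\alpha_q = dz$ and $\beta_q = dt$, so no linear system has to be solved, and one reads off
\[
\omega_{\Sigma_q}(q)(u_q,v_q) = \pi_s(dz,dt)(q) = \{z,t\}(q).
\]
Since $(dz\wedge dt)(u_q,v_q) = \{z,t\}(q)^2$, dividing shows that on this chart $\omega_{\Sigma_q} = \{z,t\}^{-1}\, dz\wedge dt$, which by \eqref{biv:wrinkle} is a nonzero constant multiple of $\big(k\,(ty+xz)\big)^{-1} dz\wedge dt$; writing this against the reference area form $\omega_{Area}$ and tracking the constant and the orientation yields \eqref{WrinkleForm}. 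Equivalently one may argue intrinsically from \eqref{E:Def-Intrinsic}: restricted to $\Sigma_q$ one has $\omega_{\Sigma_q} = \big(k\det M\big)^{-1}\, dz\wedge dt$, with $M$ the $(x,y)$-block of the Jacobian of $(C_1,C_2)$ and $\det M = -4(ty+xz)$.

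The main obstacle is computational rather than conceptual. In contrast with the earlier moves, here all six components of $\pi_s$ are nonzero and the tangent vectors $u_q,v_q$ have nonzero components in every coordinate direction, so checking the identities $\mathcal B_q(\alpha_q)=u_q$, $\mathcal B_q(\beta_q)=v_q$ and then simplifying the rational expression for the proportionality factor is long; this is handled by the Mathematica routine described in the appendix, which also verifies the polynomial identities that make the $x$- and $y$-dependence cancel. Finally, as in the other cases, one should record that \eqref{WrinkleForm} is the local expression valid on the open dense subset of $\Sigma_q$ where $(z,t)$ serve as coordinates, i.e.\ where $ty+xz\neq 0$; on the complement one chooses a different coordinate pair and obtains an analogous formula.
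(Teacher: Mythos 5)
Your first step is fine and is in fact cleaner than the paper's: choosing $u_q=\mathcal B_q(dz)$, $v_q=\mathcal B_q(dt)$ so that $\alpha_q=dz$, $\beta_q=dt$ makes \eqref{E:Symp-form-gen} give $\omega_{\Sigma_q}(u_q,v_q)=\{z,t\}$ with no linear system to solve, and the identity $(dz\wedge dt)(u_q,v_q)=\{z,t\}^2$ correctly yields $\omega_{\Sigma_q}=\{z,t\}^{-1}\,dz\wedge dt$ on the part of the leaf where $ty+xz\neq 0$. The paper instead extracts two explicit vectors spanning $\ker dC_1\cap\ker dC_2$ (partially normalized), solves $\mathcal B_q\alpha_q=u_q$, $\mathcal B_q\beta_q=v_q$ for the covectors, and evaluates $\langle\alpha_q,v_q\rangle$; your shortcut avoids all of that.

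The genuine gap is in your last step. The restriction $dz\wedge dt|_{\Sigma_q}$ is \emph{not} a constant multiple of the Euclidean area form: if $e_1,e_2$ is an oriented orthonormal basis of $T_q\Sigma_q$, then
\begin{equation*}
(dz\wedge dt)(e_1,e_2)=\pm\,\frac{\partial_xC_1\,\partial_yC_2-\partial_yC_1\,\partial_xC_2}{\left|\nabla C_1\wedge\nabla C_2\right|}
=\mp\,\frac{4(ty+xz)}{\left|\nabla C_1\wedge\nabla C_2\right|},
\end{equation*}
a point-dependent function. Carrying this through your own formula, the factor $ty+xz$ cancels and you obtain
\begin{equation*}
\omega_{\Sigma_q}=\pm\,\frac{1}{k\left|\nabla C_1\wedge\nabla C_2\right|}\,\omega_{Area},
\qquad
\left|\nabla C_1\wedge\nabla C_2\right|^2=\left|\nabla C_1\right|^2\left|\nabla C_2\right|^2-(\nabla C_1\cdot\nabla C_2)^2 ,
\end{equation*}
which is not of the form \eqref{WrinkleForm}; so ``tracking the constant and the orientation'' cannot produce the coefficient $-\tfrac{1}{2(ty+xz)}$. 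In other words, your argument correctly computes the leafwise symplectic form in the $(z,t)$ chart, but the passage to $\omega_{Area}$ is exactly the nontrivial part, and it requires either computing the Jacobian factor above honestly or working, as the paper does, with the specific (near-orthonormal) tangent vectors $u_q,v_q$ and covectors $\alpha_q,\beta_q$ displayed in its proof, against which the stated coefficient is read off. As written, your proposal does not establish \eqref{WrinkleForm}.
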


\begin{proof}
Using the corresponding Poisson structure for a wrinkling move move given in equation (\ref{biv:wrinkle}) we obtain:
\begin{equation*}
u_q=-\frac{1}{2 (t y + x z)}((2 x y - s z - 2 t
z)\frac{\partial}{\partial x}+(s t + 2 t^2 + 2
x^2)\frac{\partial}{\partial y}-2 (t y + x
z)\frac{\partial}{\partial t})
\end{equation*}

\begin{equation*}
v_q=-\frac{1}{(t y + x z)}((y^2 + z^2)\frac{\partial}{\partial
x}+(x y - t z)\frac{\partial}{\partial y}-(t y + x
z)\frac{\partial}{\partial z})
\end{equation*}

and makes $v_q$ an unitary vector. These vectors are tangent to
$\Sigma_q$ at $q$. Using the local expression of the Poisson
structure in  (\ref{biv:wrinkle}) we check that $\mathcal{B}_q
(\alpha_q)=u_q$, for
\begin{equation*}
\alpha_q=\frac{1}{k(x, y, z, t)}(\frac{1}{2 (y^2 +
z^2)}dx-\frac{-2 x y + s z + 2 t z}{4 (t y + x z) (y^2 + z^2)}dt)
\end{equation*}

Similarly, $\mathcal{B}_q(\beta_q)=v_q$, for
\begin{equation*}
\beta_q=\frac{dt}{2 (t y + x z)}
\end{equation*}
The proposition is shown as in the previous cases by calculating the symplectic form explicitly using the above equations.
\end{proof}

\appendix

{\small

\medskip

}


\end{document}